\newcommand{\maybebf}{}
    \renewcommand{\sectionmark}[1]{\markboth{##1}{}}
\renewcommand{\sectionmark}[1]{%
  \markboth{%
    \ifnum\value{section}>0
      \maybebf{\thesection.}\space
    \fi
    #1%
  }{%
    \ifnum\value{subsection}=0
            \thesection. #1
        \fi%
  }%
}
\renewcommand{\subsectionmark}[1]{%
    \markright{%
        \ifnum\value{subsection}>0
            \thesubsection. #1
        \fi%
    }%
}
\definecolor{hlinkcol}{HTML}{A00000}
\definecolor{hcitecol}{HTML}{308030}
\definecolor{zg}{gray}{0.6}
\newcommand{\zg}[1]{\textcolor{zg}{#1}}
\theoremstyle{plain}
\newtheorem{thm}{Theorem}[section]
\newtheorem{prop}[thm]{Proposition}
\newtheorem{lemma}[thm]{Lemma}
\newtheorem{cor}[thm]{Corollary}
\newtheorem*{thm*}{Theorem}
\newtheorem*{prop*}{Proposition}
\newtheorem*{conj*}{Conjecture}
\newtheorem*{cor*}{Corollary}
\newtheorem*{thmvp*}{Theorem \ref{thm:virt-par}}
\theoremstyle{definition}
\newtheorem{defin}[thm]{Definition}
\newtheorem{question}[thm]{Question}
\newtheorem*{defin*}{Definition}
\theoremstyle{remark}
\newtheorem{remark}[thm]{Remark}
\newcommand{\id}{\mathrm{id}}
\newcommand{\ZZ}{\mathbb Z}
\newcommand{\QQ}{\mathbb Q}
\newcommand{\RR}{\mathbb R}
\newcommand{\HH}{\mathbb H}
\newcommand{\commentt}[1]{}
\tikzset{
dot/.style = {circle, fill, minimum size=#1,
              inner sep=0pt, outer sep=0pt},
dot/.default = 5pt
}
\DeclareMathOperator{\image}{im}
\DeclareMathOperator{\Hom}{Hom}
\DeclareMathOperator{\Fac}{Fac}
\newcommand{\fbseries}{
\unskip\setBold\aftergroup\unsetBold\aftergroup\ignorespaces}
\newcommand{\setBoldness}[1]{\def\fake@bold{#1}}
\tikzset{
    rots/.style={anchor=south, rotate=90, inner sep=.5mm}
}
\newcommand\restr[2]{{
  \left.\kern-\nulldelimiterspace 
  #1 
  \vphantom{\big|} 
  \right|_{#2} 
  }}
\author{Jacopo G. Chen\thanks{Scuola Normale Superiore, Pisa, Italy. Email: \href{mailto:jacopo.chen@sns.it}{\texttt{jacopo.chen@sns.it}}}}
\title{Some closed hyperbolic $5$-manifolds}
\begin{document}

\DTMsetdatestyle{mydateformat}
\date{}
\maketitle

\begingroup
\centering\small
\textbf{Abstract}\par\smallskip
\begin{minipage}{\dimexpr\paperwidth-9.5cm}
We give an explicit construction of a family of closed arithmetic hyperbolic $5$-manifolds, tessellated by $\text{\num{117964800}} = 512 \cdot 16 \cdot 14400$ copies of a Coxeter simplicial prism. We proceed to study various properties of these manifolds, such as the volume and the first Betti number. We also describe a related family of $5$-manifolds with a larger volume, but a simpler construction.
\end{minipage}
\par\endgroup
\section*{Introduction}
    
It is well known that hyperbolic manifolds of finite volume, both compact and non-compact, exist in every dimension $n \ge 2$, as a consequence of the Borel--Harish-Chandra theorem. An interesting problem arises when we ask to what extent these manifolds can be described \emph{explicitly}; by this we mean some sort of combinatorial description, be it a CW complex structure or a gluing procedure for elementary geometric pieces.
These descriptions are often conducive to the computation of various topological invariants, such as the Betti numbers.

Prime candidates for the gluing of elementary pieces are hyperbolic Coxeter polytopes, that is, finite-volume quotients of hyperbolic space by Coxeter groups. This strategy is especially fruitful in the cusped case: see for example the family of right-angled hyperbolic polytopes $P_n$~\cite[Section~3]{PV}, which can be glued via the \emph{coloring method} (see below). By contrast, the compact case appears to be more complex. Already in dimension $4$, the smallest known manifolds~\cite{conder-maclachlan,long} are tessellated by \num{115200} copies of a Coxeter simplex, and in dimension $5$, there are no known explicit descriptions of any closed hyperbolic manifold. In particular, there is no closed hyperbolic $5$-manifold $M$ for which $b_1(M)$ is known.

In this paper, we construct a family $\{N_{i,I}^\pm\}$ of closed orientable hyperbolic $5$-manifolds having vanishing first Betti number, tessellated by $\text{\num{117964800}} = 512 \cdot 16 \cdot 14400$ copies of an arithmetic Coxeter polytope $P$, with Coxeter diagram
\[
    \raisebox{-0.4\height}{
    \begin{tikzpicture}
        \node[dot, fill=black] (a) at (0,0) {};
        \node[dot, fill=black] (b) at (1,0) {};
        \node[dot, fill=black] (c) at (2,0) {};
        \node[dot, fill=black] (d) at (3,0) {};
        \node[dot, fill=black] (e) at (4,0) {};
        \node[dot, fill=black] (f) at (5,0) {};
        \node[dot, fill=black] (g) at (6,0) {};
        \node[] (x) at (3,0.25) {};
        \node[] (x) at (3,-0.25) {};
        \draw[draw=black, double distance=3pt, thin, solid] (a) -- (b);
        \draw[draw=black, thin, solid] (a) -- (b);
        \draw[draw=black, thin, solid] (b) -- (c);
        \draw[draw=black, thin, solid] (c) -- (d);
        \draw[draw=black, thin, solid] (d) -- (e);
        \draw[draw=black, double distance=2.5pt, thin, solid] (e) -- (f);
        \draw[draw=black, thin, dashed] (f) -- (g);
    \end{tikzpicture}
    }
\]
A notable property of $P$, which is combinatorially a prism on a $4$-simplex, is that $14400$ copies of it can be joined to make a $120$-cell prism $Q$ (Figure~\ref{fig:prisms-PQ}), whose two bases have dihedral angles of $2\pi/3$ and $\pi/2$. Moreover, the sides of $Q$ make angles of $\pi/2$ and $\pi/4$ with the bases, respectively.

Our main strategy extends the classical \emph{coloring method}, which constructs a \emph{real toric manifold} from a right-angled polytope $P$ by coloring its facets with binary vectors (say, in $\ZZ_2^s$) and gluing $2^s$ copies of $P$. In dimension $5$ there are no compact right-angled polytopes, so we consider a generalization: manifolds with right-angled corners.

We construct such manifolds by gluing copies of $Q$ onto a suitable $4$-manifold tessellated by $120$-cells (Figure~\ref{fig:mfd-corners}). This process can be done in such a way as to obtain an orientable right-angled $5$-manifold, with many facets (codimension-$1$ boundary components) isometric to a hyperbolic right-angled $120$-cell.
We then apply the coloring method to these right-angled manifolds; a careful choice of coloring, based on the theory of \emph{linear binary codes}, allows us to substantially reduce the size of the final manifolds.

In more detail, our construction is based on some hyperbolic $4$-manifolds $Z_i$, $i=1,\dots,8$, of Euler characteristic $8$, found by Long~\cite{long}. Their $17$-fold Galois coverings $X_i$ can then be used to construct orientable right-angled $5$-manifolds $Y_i$, which can undergo the coloring construction. The definition of the Long manifolds $Z_i$ is algebraic in nature, which simplifies the study of the adjacency graph of the facets of $Y_i$ (which turns out to be the same for all $i$). With the help of the computer algebra system SageMath~\cite{sagemath}, we single out and classify some symmetrical $17$-colorings of this graph.

This already gives many real toric manifolds (which we call $M_{i,I}$) composed of $2^{17}$ copies of $Y_i$. These colorings can be converted into more efficient colorings, with binary vectors in $\ZZ_2^9$, using the properties of an exceptionally symmetrical $9$-dimensional subspace of $\ZZ_2^{17}$, called a \emph{quadratic residue code}. We call the resulting manifolds $\widehat M_{i,I}^\pm$.

Finally, these $5$-manifolds can be quotiented by a $17$-fold symmetry, resulting in relatively small manifolds $N_{i,I}^\pm$, tessellated by $\num{117 964 800}$ copies of $P$, and with hyperbolic volume less than $\num{250000}$ (computed exactly in Section~\ref{sec:properties}).
The final manifolds can be classified by computer and fall into \num{1600432} isometry classes. In summary, we have:

\begin{thm*}
    There exist some closed orientable hyperbolic $5$-manifolds having $b_1 = 0$ and volume $<\num{250000}$.
\end{thm*}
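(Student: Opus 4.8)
The plan is to carry out, in order, the chain of constructions sketched in the introduction and then to verify the two asserted properties of the resulting manifolds; below I indicate only how the pieces fit together, not the (substantial) computations. First I would check that the polytope $P$ given by the displayed Coxeter diagram is a compact, arithmetic hyperbolic Coxeter polytope in $\HH^5$ — combinatorially a prism on a $4$-simplex — by writing down its Gram matrix and invoking Vinberg's arithmeticity and compactness criteria, and that $\num{14400}$ copies of $P$ assemble into a polytope $Q\subset\HH^5$, combinatorially a prism on the $120$-cell, all of whose dihedral angles lie in $\{\pi/2,\ \pi/4,\ 2\pi/3\}$ (with $2\pi/3,\ \pi/2$ occurring along the two bases and $\pi/2,\ \pi/4$ between lateral facets and bases), as in Figure~\ref{fig:prisms-PQ}. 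These angles are exactly the ``right-angled corner'' data the generalized coloring method can process: copies of $Q$ can be fitted together around every codimension-$\geq 2$ stratum without creating cone singularities.

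Next I would take Long's eight hyperbolic $4$-manifolds $Z_i$ — each an explicit union of right-angled $120$-cells with $\chi(Z_i)=8$ — and pass to their $17$-fold Galois covers $X_i$. Attaching copies of $Q$ to the $120$-cells of $X_i$ as in Figure~\ref{fig:mfd-corners} produces a hyperbolic $5$-manifold $Y_i$ with totally geodesic boundary tessellated by right-angled $120$-cells; after checking orientability, $Y_i$ is an orientable right-angled hyperbolic $5$-manifold in the sense needed. Since the $Z_i$ are given by explicit arithmetic data, I would then compute the adjacency graph $\Gamma$ of the facets of $Y_i$ and verify that $\Gamma$ is independent of $i$.

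Using SageMath I would enumerate the proper colorings of $\Gamma$ by nonzero vectors of $\ZZ_2^{17}$ — those in which the colors along each face are linearly independent, so that the glued space is a manifold — which are invariant under a prescribed $17$-fold symmetry of $\Gamma$, and classify them up to symmetry; each such coloring $I$ gives a real toric manifold $M_{i,I}$ glued from $2^{17}$ copies of $Y_i$. Post-composing a coloring with a linear surjection $\ZZ_2^{17}\surj\ZZ_2^9$ built from the $[17,9]$ quadratic residue code $C$ — whose large automorphism group ensures the composed coloring is still proper — compresses $M_{i,I}$ to $\widehat M_{i,I}^\pm$, now glued from only $2^9=512$ copies of $Y_i$. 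Finally, the $17$-fold cyclic symmetry present both on $X_i\to Z_i$ and on $C$ descends to a free isometric $\ZZ/17$-action on $\widehat M_{i,I}^\pm$, whose quotient is $N_{i,I}^\pm$; counting tiles then shows $N_{i,I}^\pm$ is tessellated by $512\cdot 16\cdot\num{14400}=\num{117964800}$ copies of $P$.

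The volume of $N_{i,I}^\pm$ is $\num{117964800}\cdot\operatorname{vol}(P)$; evaluating $\operatorname{vol}(P)$ — equivalently the volume of the $120$-cell prism $Q$ — by the standard volume methods for Coxeter polytopes gives a value below $\num{250000}$, to be made exact in Section~\ref{sec:properties}. For the first Betti number I would use the known combinatorial description of $H^\ast(\,\cdot\,;\QQ)$ of a real toric manifold over $Y_i$ in terms of $\Gamma$ and the coloring (in the spirit of Davis--Januszkiewicz), together with transfer for the free $\ZZ/17$-quotient, reducing $b_1(N_{i,I}^\pm)$ to the dimension of the $\ZZ/17$-invariants in $H^1(\widehat M_{i,I}^\pm;\QQ)$, which a machine computation shows to vanish. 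I expect the genuine difficulty to be concentrated in two points: (i) proving that attaching the prisms $Q$ really yields a \emph{manifold} with the asserted right-angled boundary — this requires careful bookkeeping of how the angles $\pi/2,\ \pi/4,\ 2\pi/3$ of $Q$ add up around the low-dimensional faces of $X_i$, so that no cone points arise; and (ii) arranging that $b_1=0$ survives both the code-compression and the $17$-fold quotient, which is precisely why the coloring has to be chosen compatibly with the quadratic residue code rather than at random, and which in the end reduces to a finite but delicate verification.
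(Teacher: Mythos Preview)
Your outline follows the same architecture as the paper, but two points are not merely technical and would derail the argument as written.

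First, the Long manifolds $Z_i$ (and their covers $X_i$) are \emph{not} tessellated by right-angled $120$-cells; they are tessellated by $120$-cells with dihedral angle $2\pi/3$ (the order-$3$ honeycomb). This is precisely why the prism $Q$ is needed: its two bases are an order-$3$ $120$-cell and a right-angled $120$-cell, so gluing copies of $Q$ along the order-$3$ side converts the $2\pi/3$ geometry of $X_i$ into the right-angled boundary of $Y_i$. Your ``difficulty~(i)'' then has a clean resolution: lateral facets of $Q$ meet the order-$3$ base at $\pi/2$, so adjacent prisms over $X_i$ glue smoothly along their sides; and they meet the right-angled base at $\pi/4$, so two adjacent right-angled bases of $Y_i$ meet at $\pi/4+\pi/4=\pi/2$. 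If you start from right-angled $120$-cells in $Z_i$ the angles do not fit.

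Second, the $b_1$ computation cannot be done by invoking a Davis--Januszkiewicz/Choi--Park formula for $\widehat M_{i,I}^\pm$ ``over $Y_i$'' directly: that formula is for real toric spaces over a simplicial complex (equivalently, over a contractible right-angled piece), whereas $Y_i$ deformation retracts onto $X_i$ and is far from contractible. The paper's missing step, which you would need to supply, is to collapse each embedded copy of $X_i$ in $\widehat M_{i,I}^\pm$ to a point; since $b_1(X_i;\QQ)=0$, the long exact sequence shows this does not change $H_1$ with rational (or odd-characteristic) coefficients, and the collapsed space \emph{is} a genuine real toric space over the $1$-skeleton $\mathcal G$, to which Choi--Park applies and gives $b_1=0$. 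Your transfer argument for the $\ZZ/17$-quotient is then fine. (A minor point: the reason the code-compressed coloring is still proper is not the automorphism group of the QR code but the fact that the dual code has minimum distance $6>5$, so any five columns of $A$ stay independent; the cyclic automorphism is what makes the subsequent $17$-fold quotient possible.)
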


In Section~\ref{sec:properties}, we study various properties of the $5$-manifolds: in particular, we show that they are orientable, and we compute their volume from a formula for $\operatorname{vol}(P)$~\cite{vol-p}.
We also compute the first Betti number for $N_{i,I}^\pm$ and $\widehat{M}_{i,I}^\pm$ as follows. In the case of manifolds obtained by coloring a right-angled polytope, the Betti numbers can be obtained through a combinatorial formula of Choi--Park~\cite{choi-park}. While the pieces $Y_i$ have nontrivial topology, they can be made contractible by collapsing their embedded copy of $X_i$ to a point, which is enough to apply the formula; the collapsing operation does not change the first Betti number, since $b_1(X_i) = 0$. Using SageMath and a custom program, we find that all the manifolds $N_{i,I}^\pm$ and $\widehat{M}_{i,I}^\pm$ have vanishing first Betti number. We note that computing higher Betti numbers is much harder (Remark~\ref{rmk:higher-betti}) and remains an open question.

We also discuss parallelizability of manifolds obtained with this method through the following general result, which may be of independent interest:
\begin{thmvp*}
    Let $n \in \{1,3,5,7\} \cup \{4k+1 \mid k \ge 2\}$. Then every closed hyperbolic $n$-manifold $M$ is virtually parallelizable.
\end{thmvp*}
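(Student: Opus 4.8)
The plan is as follows. First I would replace $M$ by its orientation double cover, so that $M=\HH^n/\Gamma$ with $\Gamma\subset\mathrm{SO}^+(n,1)$; since a finite cover of a closed hyperbolic manifold is again one, and parallelizability pulls back, this loses nothing. The geometric input is that $TM\oplus\underline{\RR}$ is a \emph{flat} vector bundle: in the hyperboloid model $\HH^n\subset\RR^{n+1}$ one has the $\mathrm{SO}^+(n,1)$-equivariant splitting $\RR^{n+1}=T_x\HH^n\oplus\RR x$, so the trivial bundle $\HH^n\times\RR^{n+1}$ with the diagonal $\Gamma$-action descends to a flat bundle $E\to M$ splitting as $TM\oplus(\text{trivial line})$, with holonomy the inclusion $\Gamma\hookrightarrow\mathrm{SO}^+(n,1)\subset\mathrm{GL}_{n+1}(\RR)$. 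In particular all rational Pontryagin classes of $TM$ vanish, so $[TM]\in\widetilde{KO}(M)$ is torsion. It therefore suffices to make $E$ virtually trivial as a topological bundle: if $E$ is trivial over a finite cover $M'$ then $TM'$ is stably parallelizable, whence the classifying map $M'\to B\mathrm{SO}(n)$ lifts to the fibre $S^n$ of $B\mathrm{SO}(n)\to B\mathrm{SO}(n+1)$, exhibiting $TM'\cong g^*TS^n$ for some $g\colon M'\to S^n$; since $n$ is odd the clutching class $\tau_n\in\pi_{n-1}(\mathrm{SO}(n))$ of $TS^n$ has order dividing $2$, so $TM'$ is trivial iff $\deg g$ is even, and a degree-$r$ cover multiplies $\deg g$ by $r$. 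An even-degree cover of $M'$ exists because $\Gamma$ is Zariski-dense in $\mathrm{SO}^+(n,1)$ by Borel density, hence by strong approximation surjects onto finite groups of even order when $n\ge 2$ (the case $n=1$ being trivial). For $n\in\{3,7\}$ one has $\tau_n=0$ and the even cover is unnecessary; for $n=3$ every closed orientable $3$-manifold is parallelizable regardless.

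Making $E$ virtually trivial is where rigidity and Deligne--Sullivan enter. By Mostow rigidity ($n\ge3$) the holonomy $\Gamma\hookrightarrow\mathrm{SO}^+(n,1)$ is the unique discrete faithful representation up to conjugacy, hence isolated in the character variety, hence conjugate to one with entries in $\overline{\QQ}$. The Deligne--Sullivan theorem on flat bundles with holonomy in $\mathrm{GL}_N(\overline{\QQ})$ then produces a finite cover over which $TM\otimes\CC$ is trivial; over that cover the classes $c_i(TM\otimes\CC)$, hence all integral Pontryagin classes of $TM$, vanish, and $[TM]$ is a $2$-torsion class killed by complexification (as $r\circ c=2$ on $KO$). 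To force $[TM]=0$ one kills the remaining low-degree Stiefel--Whitney obstructions: $w_1(TM)$ disappears on the orientation cover, and $w_2(TM)$ disappears on a further cover because $E$ is \emph{virtually spin} — pulling back the central extension $\ZZ/2\to\mathrm{Spin}(n,1)\to\mathrm{SO}(n,1)$ along the holonomy gives a central $\ZZ/2$-extension $\widetilde\Gamma$ of $\Gamma$, and $\widetilde\Gamma$, being finitely generated linear, is residually finite, so the extension splits over a finite-index subgroup.

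Now assemble these vanishings in the Atiyah--Hirzebruch spectral sequence for $\widetilde{KO}^0(M)$. A reduced class can only receive contributions in filtrations $\equiv 0,1,2,4\pmod 8$; those in filtrations $1,2$ are detected by $w_1$ and $w_2$, the integral ones in filtration $4j$ by the Pontryagin classes, and all of these are now gone. For $n\le 7$ nothing remains, so $M$ is virtually stably parallelizable, hence (by the first paragraph) virtually parallelizable; this settles $n\in\{1,3,5,7\}$. For $n=4k+1$ with $k\ge 2$ there are still the higher $2$-torsion contributions, in $H^{8j+1}(M;\ZZ/2)$ and $H^{8j+2}(M;\ZZ/2)$ for $j\ge1$; showing that these too can be removed by passing to further finite covers is the technical heart of the proof, and it is precisely here that the hypothesis on $n$ is used. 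Roughly, one exploits $w_1(TM)=w_2(TM)=0$ together with $w(TM)^2=1$ (since the realification of $TM\otimes\CC$ is $TM\oplus TM$, which is trivial) and Poincaré duality to reduce each such class either to a $\ZZ/2$-valued characteristic-number-type invariant — killed by an even-degree cover — or to a class in $H^1(M;\ZZ/2)$ — killed by an index-$2$ cover — or to something a Wu-formula computation shows already vanishes; when $n\equiv 3\pmod 4$ with $n\ge 11$ this reduction breaks down, leaving an obstruction in a low positive-degree cohomology group that can survive every finite cover, which is why those dimensions are excluded from the statement.

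I expect the main obstacle to be exactly that last step: controlling, in dimensions $n=4k+1\ge 9$, the residual $2$-torsion $KO$-theoretic obstruction once the Pontryagin classes and $w_1,w_2$ have been removed. Everything preceding it — the flat-bundle description, the reduction to stable parallelizability via an even-degree cover, the appeals to Mostow rigidity and Deligne--Sullivan, and the virtual-spin argument for $w_2$ — should be comparatively routine; it is the fine interplay between the Atiyah--Hirzebruch differentials, the Wu formula, and the question of which cohomology classes of a hyperbolic manifold can be trivialized in a finite cover that does the real work and that pins down the list $\{1,3,5,7\}\cup\{4k+1\mid k\ge2\}$.
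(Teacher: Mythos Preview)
Your architecture is inverted relative to the paper, and this leads to a genuine gap. You locate the ``technical heart'' in proving \emph{virtual stable parallelizability} for $n=4k+1$, $k\ge 2$, via an Atiyah--Hirzebruch spectral-sequence argument that you yourself describe only as ``roughly'' working. But this step is unnecessary: the Deligne--Sullivan theorem you cite already gives virtual stable parallelizability \emph{directly, in every dimension} --- not merely triviality of $TM\otimes\CC$. The paper simply quotes this. So your paragraphs on Mostow rigidity, killing $w_2$, and the AHSS are attacking a problem that is already solved, and the sketchy Wu-formula step you flag as the expected obstacle never needs to be carried out.

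Conversely, the step you treat as routine --- the stable-to-unstable reduction in your first paragraph --- is exactly where the dimension hypothesis enters, and your treatment of it is too quick. You write that for odd $n$ one has $TM'\cong g^*TS^n$ with $\tau_n$ of order dividing $2$, so $TM'$ is trivial iff $\deg g$ is even, and an even cover fixes this. If that argument were correct as stated, it would prove the theorem for \emph{all} odd $n$, including $n=4k+3$ with $k\ge 2$, which the paper explicitly leaves as an open question. At minimum you must address the non-uniqueness of the lift $g$ and justify why $\deg g \bmod 2$ is a well-defined invariant of $TM'$; you must also check that it really coincides with the known obstruction. The paper handles this step quite differently: by a theorem of Thomas, for a stably parallelizable $(4k+1)$-manifold the sole obstruction to parallelizability is the Kervaire semi-characteristic $\kappa(M')=\sum_i b_{2i}(M';\ZZ_2)\bmod 2$; since $M'$ is stably parallelizable one has $w_{4k}(M')=0$, and then Atiyah's index formula expresses $\kappa$ as a sum of local indices of a generic $2$-frame field, which manifestly vanishes on any even-degree cover. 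Even-index subgroups exist by Lubotzky's theorem on linear groups. That is the whole proof.
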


Lastly, one may notice that the definition of the manifolds $N_{i,I}^\pm$ involves many non-canonical choices, starting from the initial Long manifold $Z_i$. In Section~\ref{sec:niceX}, we introduce another $4$-manifold $X$, tessellated by $650$ $120$-cells, as opposed to $136$ for the $X_i$; its definition, based on~\cite{everitt-maclachlan}, is arguably much more elegant. We also construct the adjacency graph of the resulting right-angled $5$-manifold $Y$ and study a few possible colorings.

The code used to carry out the computations in this paper can be found in a GitHub repository~\cite{git}.
\subsection*{Structure of the paper}
In Section~\ref{sec:prism}, we introduce the Coxeter polytope $P$. Then, in Section~\ref{sec:4mfds}, we define the Long manifolds $Z_i$ and use them to construct right-angled $5$-manifolds $Y_i$. In Section~\ref{sec:coloring}, we define some colorings on the facets of $Y_i$ in order to construct small $5$-manifolds. Then, in Section~\ref{sec:properties}, we study some properties of the $5$-manifolds constructed previously, including first homology and virtual parallelizability.
Finally, in Section~\ref{sec:niceX}, we define the $4$-manifold $X$, the resulting right-angled $5$-manifold $Y$, and some colorings.
\subsection*{Acknowledgments}
I am grateful to my advisor Bruno Martelli for his support and guidance during the writing of this paper.
\section{A hyperbolic Coxeter prism}\label{sec:prism}
A well-known general method to obtain new hyperbolic manifolds consists of attaching primitive \emph{pieces}, usually Coxeter polytopes, along their boundary, in such a way that they close up without singularities.

Let us introduce a hyperbolic Coxeter $5$-polytope $P$, corresponding to the Coxeter group
\begin{equation}
     \Gamma \coloneqq \raisebox{-0.6\height}{
    \begin{tikzpicture}
        \node[dot, fill=black, label={below:\strut $a$}] (a) at (0,0) {};
        \node[dot, fill=black, label={below:\strut $b$}] (b) at (1,0) {};
        \node[dot, fill=black, label={below:\strut $c$}] (c) at (2,0) {};
        \node[dot, fill=black, label={below:\strut $d$}] (d) at (3,0) {};
        \node[dot, fill=black, label={below:\strut $e$}] (e) at (4,0) {};
        \node[dot, fill=black, label={below:\strut $f$}] (f) at (5,0) {};
        \node[dot, fill=black, label={below:\strut $g$}] (g) at (6,0) {};
        \node[] (x) at (3,0.25) {};
        \node[] (x) at (3,-0.25) {};
        \draw[draw=black, double distance=3pt, thin, solid] (a) -- (b);
        \draw[draw=black, thin, solid] (a) -- (b);
        \draw[draw=black, thin, solid] (b) -- (c);
        \draw[draw=black, thin, solid] (c) -- (d);
        \draw[draw=black, thin, solid] (d) -- (e);
        \draw[draw=black, double distance=2.5pt, thin, solid] (e) -- (f);
        \draw[draw=black, thin, dashed] (f) -- (g);
    \end{tikzpicture}
    }
    \label{eq:P-labeling}
\end{equation}
This polytope was first studied by Bugaenko~\cite{bugaenko}, and it is remarkably simple: its number of facets exceeds its dimension by $2$ (as opposed to $1$ for simplices).
In fact, $P$ is a prism with bases $f$ and $g$, which are combinatorially isomorphic to the fundamental domain of the subgroup $\langle a,b,c,d,e \rangle$ acting on $\HH^4$; what is more, both bases are Coxeter $4$-simplices.


The facet $g$ makes right angles with all the other facets it intersects; hence, it is isometric to the Coxeter $4$-simplex $[5,3,3,3]$, which is the fundamental orthoscheme of a hyperbolic $120$-cell with dihedral angles of $2\pi / 3$ (of order $3$), and is described by the subdiagram 
\begin{equation}
\begin{tikzpicture}
    \node[dot, fill=black] (a) at (0,0) {};
    \node[dot, fill=black] (b) at (1,0) {};
    \node[dot, fill=black] (c) at (2,0) {};
    \node[dot, fill=black] (d) at (3,0) {};
    \node[dot, fill=black] (e) at (4,0) {};
    \draw[draw=black, double distance=3pt, thin, solid] (a) -- (b);
    \draw[draw=black, thin, solid] (a) -- (b);
    \draw[draw=black, thin, solid] (b) -- (c);
    \draw[draw=black, thin, solid] (c) -- (d);
    \draw[draw=black, thin, solid] (d) -- (e);
\end{tikzpicture}
\end{equation}
induced by $\{a,b,c,d,e\}$.
The situation for $f$ is slightly more complex: to compute the dihedral angle between $d \cap f$ and $e \cap f$ as facets of $f$, we note that the link of $d\cap e\cap f$ is a spherical triangle with angles of $\pi/3, \pi/2, \pi/4$. By spherical trigonometry, the angle between $d \cap f$ and $e \cap f$ is $\pi/4$. Hence, the facet $f$ is the simplex $[5,3,3,4]$, i.e., the fundamental orthoscheme of a right-angled (order-$4$) $120$-cell:
\begin{equation}
    \begin{tikzpicture}
    \node[dot, fill=black] (a) at (0,0) {};
    \node[dot, fill=black] (b) at (1,0) {};
    \node[dot, fill=black] (c) at (2,0) {};
    \node[dot, fill=black] (d) at (3,0) {};
    \node[dot, fill=black] (e) at (4,0) {};
    \draw[draw=black, double distance=3pt, thin, solid] (a) -- (b);
    \draw[draw=black, thin, solid] (a) -- (b);
    \draw[draw=black, thin, solid] (b) -- (c);
    \draw[draw=black, thin, solid] (c) -- (d);
    \draw[draw=black, double distance=2.5pt, thin, solid] (d) -- (e);
\end{tikzpicture}
\end{equation}
The orbit of $P$ with respect to the subgroup $\langle a,b,c,d \rangle$ is a $120$-cell prism $Q$, which realizes a \emph{cobordism} between two hyperbolic $120$-cells with different dihedral angles (Figure~\ref{fig:prisms-PQ}). The sides of $Q$ make angles of $\pi/2$ with the order-$3$ base and $\pi/4$ with the order-$4$ base.

\begin{figure}[ht]
    \centering
    \includegraphics[width=0.55\textwidth]{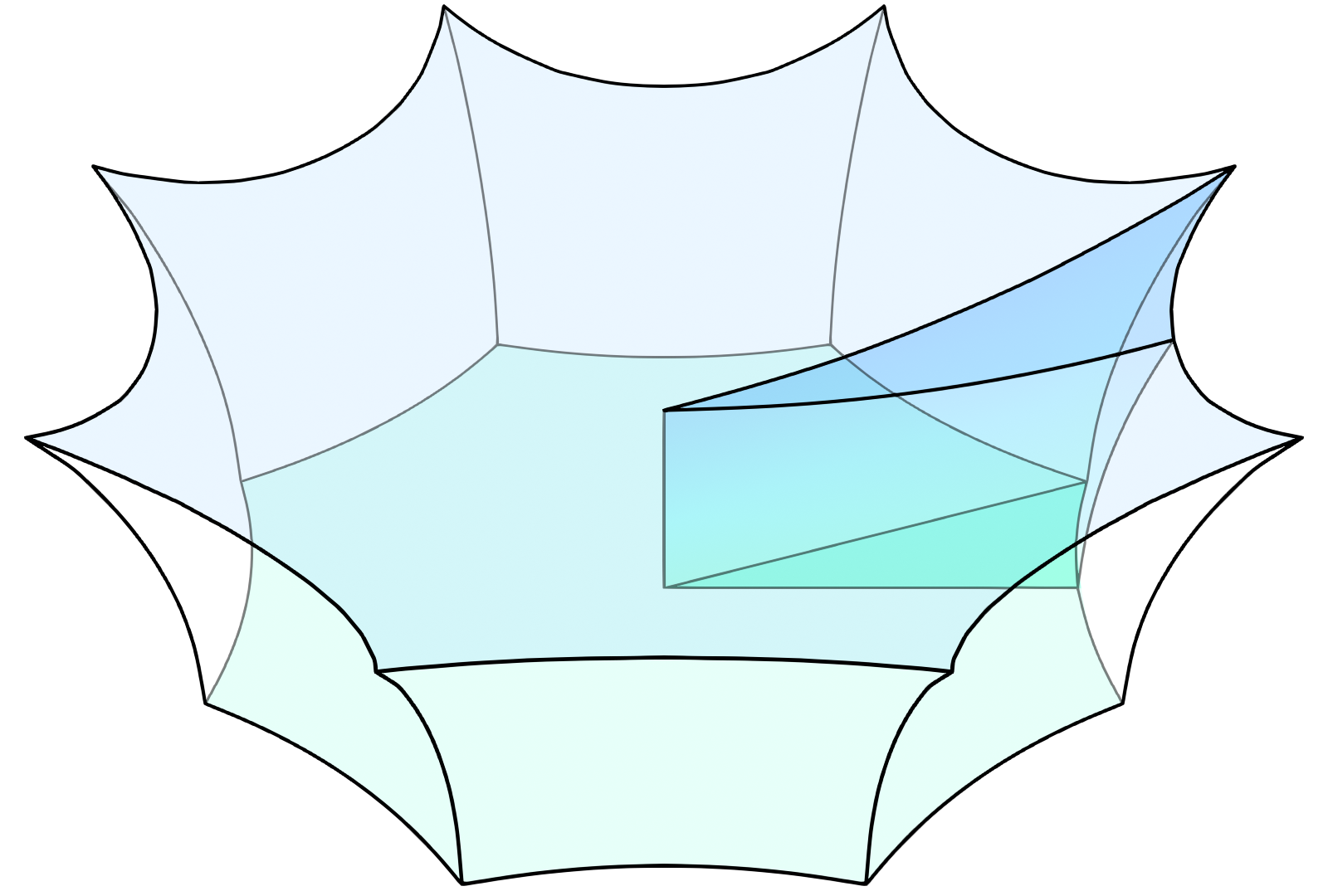}
    \caption{A schematic three-dimensional representation of the simplicial prism $P$ within the $120$-cell prism $Q$. 
    }
    \label{fig:prisms-PQ}
\end{figure}

The order-$3$ base sits naturally inside a $120$-cell honeycomb which tiles a copy of $\HH^4$. By placing copies of $Q$ onto $\HH^4$ in the same pattern, we obtain a combinatorially infinite polytope, where one facet is $\HH^4$, and the rest are infinitely many right-angled $120$-cells meeting at right angles with each other. The vertices thus formed have the geometry of a $5$-cube vertex.

While this object is infinite, it is still possible to find compact quotients of the order-$3$ $120$-cell honeycomb. We can glue copies of $Q$ onto such a manifold, obtaining an orientable $5$-manifold with corners, with right angles between its codimension-$1$ boundary components (Figure~\ref{fig:mfd-corners}). A slight generalization of the \emph{coloring method}, usually applied to right-angled polytopes, leads to a closed orientable manifold, as we will show in the following sections.

\begin{figure}[ht]
    \centering
    \includegraphics[width=0.55\textwidth]{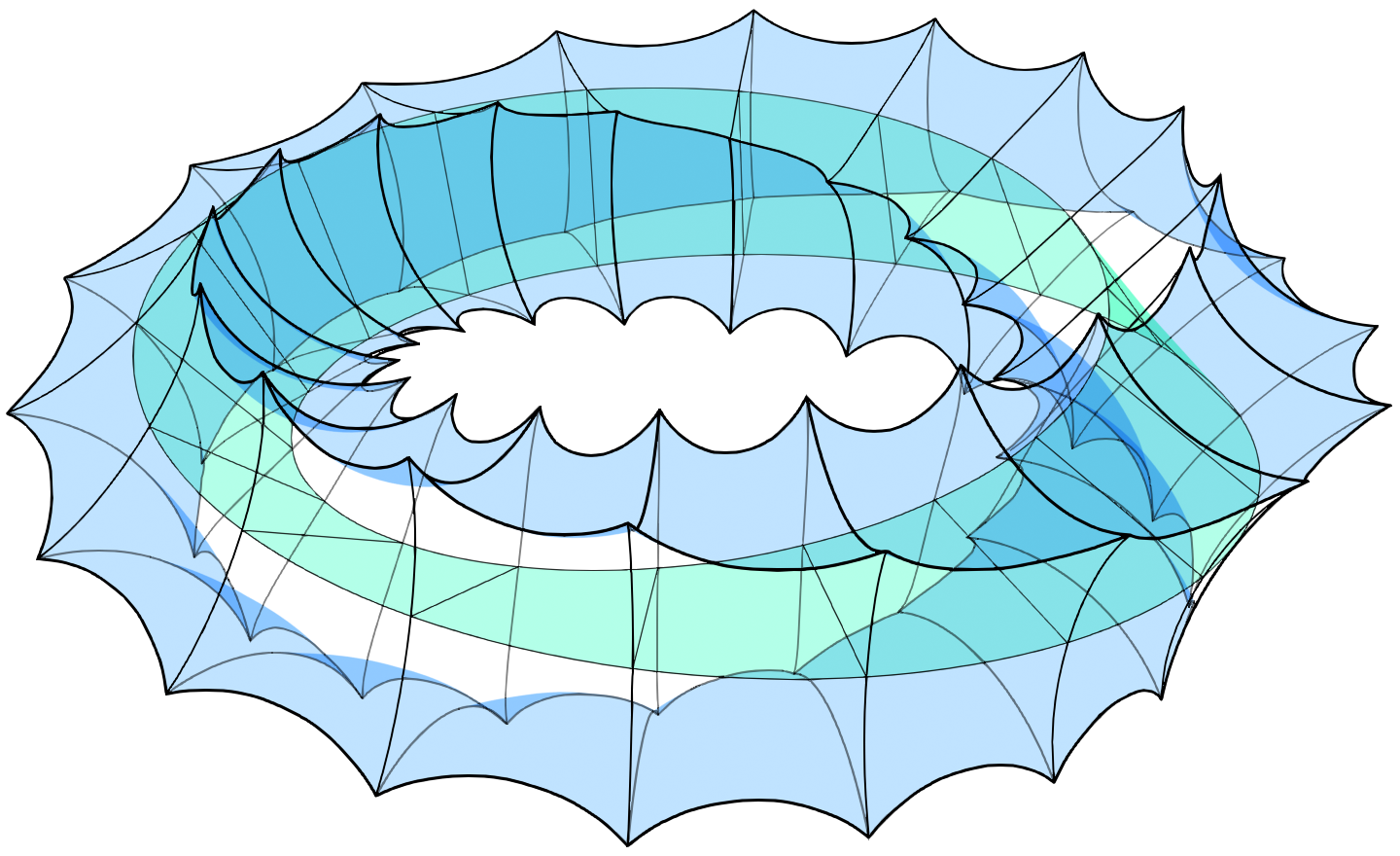}
    \caption{A three-dimensional representation of an orientable manifold with right-angled corners, obtained by arranging copies of $Q$ (here, a square prism) on a non-orientable hypersurface (here, a Möbius strip). Note that we will consider only $4$-manifolds without boundary, unlike the Möbius strip; hence, only the blue faces are relevant to our case.}
    \label{fig:mfd-corners}
\end{figure}

\section{Some \texorpdfstring{$4$}{4}-manifolds}\label{sec:4mfds}
The goal of this section is to find closed $4$-manifolds tessellated by copies of a $120$-cell $D$ with dihedral angle $2\pi/3$, which is equivalent to finding finite-index torsion-free subgroups of the Coxeter group
\begin{equation}
    G \coloneqq \raisebox{-0.6\height}{
    \begin{tikzpicture}
        \node[dot, fill=black, label={below:\strut $a$}] (a) at (0,0) {};
        \node[dot, fill=black, label={below:\strut $b$}] (b) at (1,0) {};
        \node[dot, fill=black, label={below:\strut $c$}] (c) at (2,0) {};
        \node[dot, fill=black, label={below:\strut $d$}] (d) at (3,0) {};
        \node[dot, fill=black, label={below:\strut $e$}] (e) at (4,0) {};
        \node[] (x) at (3,0.25) {};
        \node[] (x) at (3,-0.25) {};
        \draw[draw=black, double distance=3pt, thin, solid] (a) -- (b);
        \draw[draw=black, thin, solid] (a) -- (b);
        \draw[draw=black, thin, solid] (b) -- (c);
        \draw[draw=black, thin, solid] (c) -- (d);
        \draw[draw=black, thin, solid] (d) -- (e);
    \end{tikzpicture}
    }
    \label{eq:D-labeling}
\end{equation}
Some research has already been done on this problem, starting with the smallest known closed hyperbolic $4$-manifold, of Euler characteristic $8$, constructed by Conder-Maclachlan~\cite{conder-maclachlan}, which was later accompanied by more examples of Long~\cite{long} with the same volume. Our construction will take inspiration from the latter manifolds, which have a more canonical, algebraic definition.

\subsection{Long's manifolds}

Long's construction starts from noticing that $G$ has a subgroup
\begin{equation}
S \coloneqq \langle a, c, e, decd, bacbab \rangle
\end{equation}
of index $85$. The permutation action $\rho$ of $G$ on its right cosets defines a quotient of $G$ with image isomorphic to the simple group $\mathrm O(5,4)$, an orthogonal group over $\mathbb F_4$ of cardinality $\text{\num{979200}} = 2^8\cdot 3^2 \cdot 5^2 \cdot 17$; these claims and subsequent ones may be verified with a computer algebra system such as GAP~\cite{GAP4}.

In order to check that a subgroup has no torsion, it suffices to ensure that it does not intersect the conjugacy classes of prime-order elements of vertex stabilizers of $G$. Representatives of such classes were computed in~\cite{conder-maclachlan}.

The normal subgroup $K \coloneqq \ker \rho$ contains the prime torsion conjugacy class of $(abcd)^{15}$ and no others. Consider now a $17$-Sylow subgroup $C_{17} < \mathrm{O}(5,4)$: it turns out that the preimage $C_{17}K$ has the same property (and has index $\num{57600}$ in $G$). Finally, $8$ of the $15$ nontrivial maps in $\Hom(C_{17}K,\ZZ_2)$ do not annihilate $(abcd)^{15}$: equivalently, this torsion element is sent to a non-zero vector in $(C_{17}K)^\mathrm{ab} \otimes \ZZ_2$. Such maps define eight torsion-free subgroups $H_i$\ $(i = 1,\dots,8)$ of index \num{115200} in $G$, which correspond to manifolds $Z_i$ tessellated by \num{115200} simplices and $\num{115200}/\num{14400} = 8$ copies of the $120$-cell $D$.

As can be expected from the small size of these manifolds, their tessellations all involve self-adjacencies in copies of $D$, which makes them unsuitable for the coloring method (as defined in Section~\ref{sec:coloring}). Thus, we will look for larger manifolds: to this end, we consider the subgroups $K_i\coloneqq K \cap H_i$, which have index $2$ in $K$ and index $17$ in $H_i$. They are normalized by the whole $C_{17}K$, so they correspond to $17$-fold coverings of the Long manifolds with deck transformations of order $17$. Consequently, they are tessellated by $136 = 17\cdot 8$ copies of $D$. We will call these manifolds $X_i$.

The \emph{orientation subgroup} $G^+ \coloneqq \langle ab, bc, cd, de \rangle$ of index $2$, consisting of all words of even length, does not contain any of the $K_i$. It follows that the $X_i$ are non-orientable and that the subgroup $K_i^+ \coloneqq K_i \cap G^+$ corresponds to the orientable double cover of $X_i$, a manifold tessellated by $272$ copies of $D$.

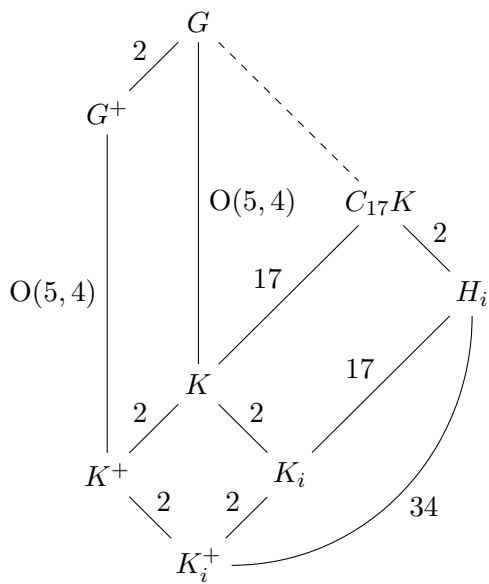
\begin{figure}
    \centering
    \begin{tikzpicture}[scale=1.2]
        \node (G)  at (0,0)    {$G$};
        \node (G2) at (-1,-1)  {$G^+$};
        \node (K)  at (0,-4)   {$K$};
        \node (CK) at (2,-2)   {$C_{17}K$};
        \node (Hi) at (3,-3)   {$H_i$};
        \node (Ki) at (1,-5)   {$K_i$};
        \node (K2) at (-1,-5)  {$K^+$};
        \node (Ki2) at (0,-6) {$K_i^+$};
        
        \draw[draw=black, thin, solid]
            (G) to node[above left=-2pt]{$2$} (G2);
        \draw[draw=black, thin, solid]
            (G) to node[right]{$\mathrm{O}(5,4)$} (K);
        \draw[draw=black, thin, solid]
            (G2) to node[left]{$\mathrm{O}(5,4)$} (K2);
        \draw[draw=black, thin, solid]
            (K) to node[above left=-2pt]{$2$} (K2);
        \draw[draw=black, thin, solid]
            (CK) to node[above left=-2pt]{$17$} (K);
        \draw[draw=black, thin, solid]
            (CK) to node[above right=-2pt]{$2$} (Hi);
        \draw[draw=black, thin, solid]
            (K) to node[above right=-2pt]{$2$} (Ki);
        \draw[draw=black, thin, solid]
            (K2) to node[above right=-2pt]{$2$} (Ki2);
        \draw[draw=black, thin, solid]
            (Hi) to node[above left=-2pt]{$17$} (Ki);
        \draw[draw=black, thin, solid]
            (Ki) to node[above left=-2pt]{$2$} (Ki2);
        \draw[draw=black, thin, solid]
            (Hi) to[out=270, in=0] node[below right=-2pt]{$34$} (Ki2);
        \draw[draw=black, thin, dashed]
            (G) -- (CK);
    \end{tikzpicture}
    \caption{Subgroup lattice of $G$. Solid lines denote normal subgroups, labeled by the quotient group or by an integer $n$, standing for $\ZZ_n$.}
    \label{fig:sg-lattice}
\end{figure}

We can now construct an orientable hyperbolic $5$-manifold with right-angled corners $Y_i$, whose interior is diffeomorphic to the determinant line bundle on $X_i$, by gluing copies of the prism $Q$ on both sides of each copy of $D$ in $X_i$. The \emph{facets} of $Y_i$, that is, its codimension-$1$ boundary components, are right-angled $120$-cells. In summary, we have:

\begin{thm}
    For each $i = 1,\dots,8$, there exists an orientable hyperbolic $5$-manifold $Y_i$ with right-angled corners, i.e. locally modeled on an orthant of $\HH^5$, whose $4$-dimensional facets are $272$ right-angled $120$-cells.
\end{thm}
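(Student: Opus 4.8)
The plan is to construct $Y_i$ by hand, gluing $272$ copies of the $120$-cell prism $Q$ along their facets in the combinatorial pattern of the tessellation of $X_i$, and then to verify, in turn, that the result is a manifold with right-angled corners, that it is hyperbolic, and that it is orientable.

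Write $Q\cong D\times[0,1]$, with order-$3$ base $D\times\{0\}$, right-angled (order-$4$) base $D\times\{1\}$, and one lateral facet $\phi\times[0,1]$ for each of the $120$ dodecahedral facets $\phi$ of $D$. For each of the $136$ copies $D_j$ of $D$ in $X_i$ introduce two prisms $Q_j^{+},Q_j^{-}$, thought of as lying on the two sides of the hypersurface $X_i$, and glue: the order-$3$ base of $Q_j^{+}$ to that of $Q_j^{-}$ by $\id_{D_j}$; and, whenever $D_j,D_k$ are adjacent in $X_i$ along a $3$-face $\phi$, the lateral facet over $\phi$ in $Q_j^{s}$ to the one in $Q_k^{s'}$, where the rule $s\mapsto s'$ is dictated by the transition cocycle of the orientation bundle of $X_i$ across $\phi$ (same side if the two local orientations match across $\phi$, opposite side otherwise). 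The $272$ order-$4$ bases are left free, each a right-angled $120$-cell. Since $X_i$ is non-orientable this cocycle has nontrivial monodromy, so the prisms genuinely do not split into two groups of $136$; making this ``two-sided, with a twist'' gluing precise is the point that requires the most care, and it is exactly what will force $Y_i$ to be orientable.

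The geometric verification is then routine. Each $Q$ is a compact hyperbolic $5$-polytope — the $[5,3,3]$-orbit of the compact Coxeter prism $P$ of Section~\ref{sec:prism} — and the gluings are facet isometries, so $Y_i$ carries a hyperbolic structure in the complement of its codimension-$2$ skeleton; this extends over the interior codimension-$2$ faces because the dihedral angles around each of them sum to $2\pi$: the only such faces not on $\partial Y_i$ are of the form $\psi\times[0,1]$ with $\psi$ a $2$-face of $D$, and three prisms meet along each, contributing the dihedral angle $2\pi/3$ of $D$ apiece (this is where $X_i$ being a closed $D$-tessellated manifold enters). Along $\partial Y_i$, two order-$4$ bases meet across a dodecahedron $\phi\times\{1\}$ whenever the corresponding lateral facets were identified, each at angle $\pi/4$ to that separating facet, hence at $\pi/4+\pi/4=\pi/2$ to one another; and inside each order-$4$ base, being a right-angled $120$-cell, all dihedral angles are $\pi/2$. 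Propagating this down the face poset — the link of a lower corner face of $Y_i$ is the corresponding link inside a right-angled $120$-cell, stabilized by the $[0,1]$-factor — shows that $Y_i$ is locally modeled on an orthant $\{x_1\ge 0,\dots,x_k\ge0\}$ of $\HH^5$, the deepest ($k=5$) corners having the geometry of a $5$-cube vertex. Finally $Y_i$ is compact, being a finite union of compact pieces.

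For orientability I would identify $\mathrm{int}(Y_i)$ with the total space of the determinant line bundle $L=\det TX_i\to X_i$. For $t\in(0,1)$, the ``slice'' built from the copies $D_j\times\{t\}$ glued along lateral facets by the sign rule above is, by construction, the orientation double cover $\widehat X_i$; letting $t$ range over $(-1,1)$ and collapsing the $t=0$ slice along the covering $\widehat X_i\to X_i$ realizes $L\cong\bigl(\widehat X_i\times\RR\bigr)/\!\sim$, where $(x,t)\sim(\sigma x,-t)$ and $\sigma$ is the deck involution. Since $w_1(L)=w_1(X_i)$, the total space $L$ is orientable, and a connected manifold with corners is orientable if and only if its interior is; hence $Y_i$ is orientable. (Alternatively, one checks directly that the $272$ prisms carry compatible orientations, using that the sign rule composed with the exchange $Q^{+}\!\leftrightarrow Q^{-}$ is orientation-preserving.) The main obstacle throughout is purely bookkeeping: organising the twisted ``both sides'' gluing so that it is simultaneously well defined, yields a manifold, and produces an orientable total space — the requisite geometric input being exactly the angles $2\pi/3,\ \pi/2,\ \pi/4$ recorded in Section~\ref{sec:prism}.
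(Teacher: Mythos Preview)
Your construction is exactly the paper's --- glue copies of $Q$ on both sides of each $120$-cell in $X_i$ and identify the interior with the total space of the determinant line bundle on $X_i$ --- only with the geometric verification spelled out where the paper merely asserts it.

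One small omission in your angle check: you claim that the only interior codimension-$2$ faces are of the form $\psi\times[0,1]$ with $\psi$ a pentagonal $2$-face of $D$, but working with the $272$ individual prisms there is a second type, namely $\phi\times\{0\}$ for $\phi$ a dodecahedral facet of $D$, lying between the order-$3$ base and a lateral facet. After all gluings, four prisms $Q_j^{+},Q_j^{-},Q_k^{s},Q_k^{-s}$ meet around such a face, each contributing the angle $\pi/2$ between its order-$3$ base and its lateral facet (Section~\ref{sec:prism}), again summing to $2\pi$. So the argument goes through; you just need to add this case.
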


\subsection{The adjacency graph}\label{sec:adj-graph}
In order to apply the coloring method to $Y_i$, we first need to check that none of its facets is glued to itself via some dodecahedron. Surprisingly, the facet adjacency graph is the same for all the $Y_i$ and can be described in a relatively simple way.
First, note that the graph also naturally describes adjacencies in the orientable double cover $X_i^+$ of $X_i$, tessellated by $272$ $120$-cells. Each $[5,3,3,3]$ simplex in $X_i^+$ corresponds to a coset $K_i^+ \gamma$ for some $\gamma \in G$, while each $120$-cell corresponds to a double coset $K_i^+ \gamma \Sigma$, where $\Sigma \coloneqq \langle a,b,c,d \rangle \simeq H_4$ is the symmetry group of the $120$-cell.

This subset is actually a left coset: since $K_i^+$ does not intersect the conjugacy class of $(abcd)^{15}$, we have $K^+ \coloneqq K \cap G^+ = K_i^+ \sqcup K_i^+ \cdot \gamma(abcd)^{15}\gamma^{-1}$ and
\begin{equation}
    K_i^+ \gamma \Sigma = K_i^+ \gamma [\Sigma \cup (abcd)^{15}\cdot \Sigma]
    = [K_i^+ \cup K_i^+\cdot  \gamma(abcd)^{15}\gamma^{-1}] \gamma\Sigma
    = K^+ \gamma\Sigma,
\end{equation}
which can be written as $\gamma \Sigma K^+$ by normality of $K^+$ in $G$. (Notice that this expression is already independent of $i$.) The vertices of our adjacency graph are left cosets of $\Sigma K^+$ in $G$. Passing to the quotient $G/K^+ \simeq \mathrm{O}(5,4) \times \ZZ_2$, these are in bijection with left cosets of $\overline{\Sigma}$, the image of $\Sigma$ in $G / K^+$. The map $\Sigma \to G / K^+$ has kernel of order $2$, so $|\overline{\Sigma}| = 7200$.

Regarding the edges, if we represent cells in the $120$-cell honeycomb as left cosets of $\Sigma$ in $G$, all pairs of adjacent cells are of the form $(\gamma\Sigma, \gamma e\Sigma)$, corresponding to the adjacency between the simplices $\gamma$ and $\gamma e = (\gamma e \gamma^{-1}) \gamma$. 

Summing up, the adjacency graph $\mathcal G$ we seek is given by:
\begin{align}
    V(\mathcal G) &= \{\gamma\overline{\Sigma} \mid \gamma \in G/K^+\},
\\  E(\mathcal G) &= \{(\gamma\overline{\Sigma}, \gamma e\overline{\Sigma}) \mid \gamma \in G/K^+\},
\end{align}
which is the same for all the $X_i^+$ (and the $Y_i$).

The graph $\mathcal G$ has no loops: indeed, it suffices that $\gamma \overline{\Sigma} \ne \gamma e \overline{\Sigma}$ for all $\gamma \in G/K^+$, which is equivalent to (the image of) $e$ not being an element of $\overline {\Sigma}$; we can easily check this using GAP. As a consequence, for all $i$, no facet of $Y_i$ meets itself along a ridge.

\begin{remark}
    The space $\HH^5 / K^+$ is an orbifold cover of the $[5,3,3,3]$ simplex, and is itself doubly covered by all the $X_i^+$. It is tessellated by $272$ copies of a ``half-$120$-cell'', obtained as a quotient of the standard $120$-cell by the action of $(abcd)^{15}$, which corresponds to multiplication by $-1$ in $\RR^4$. The graph $\mathcal G$ may also be described as the adjacency graph of this tessellation. 
\end{remark}

\begin{remark} \label{rmk:nicer-graph}
    There is a more convenient way to represent the set $V(\mathcal G)$. The right action of $G$ on the cosets of 
    $S \cap G^+ = \langle ac, ae, decd, bacbab \rangle$,
    of index $170$, has kernel $K^+$ and leads to a faithful permutation representation $G / K^+ \hookrightarrow S_{170}$. The orbits of $\overline{\Sigma}$ have sizes $150$, $10$, $10$; choose a $10$-element orbit $\omega$. It can be checked that $\overline{\Sigma}$, acting on the subsets of $\{1,2,\dots,170\}$, is the stabilizer of $\omega$. Therefore, $V(\mathcal G)$ can be constructed as the orbit of $\omega$ under the action of $G / K^+$ on subsets of $\{1,2,\dots,170\}$. 
\end{remark}
\section{Closed \texorpdfstring{$5$}{5}-manifolds}\label{sec:coloring}
In this section, we introduce the coloring method and apply it to our right-angled manifolds $Y_i$, obtaining some large closed hyperbolic $5$-manifolds with many symmetries, which we then use to obtain smaller manifolds as quotients.

\subsection{The coloring method}
The \emph{coloring method} is a well-known technique used to construct hyperbolic manifolds without boundary from right-angled polytopes~\cite{garrison-scott,davis-januszkiewicz,vesnin}.

Let $\mathcal P$ be a compact right-angled hyperbolic polytope with facets $\Fac(\mathcal P) = \{F_1, \dots, F_k\}$. More generally, we may allow $\mathcal P$ to be a right-angled manifold, by additionally requiring its facets to be \emph{embedded} or, equivalently, not self-adjacent along a ridge.

\begin{defin}
    A \emph{coloring} of $\mathcal P$ is a map $\lambda\colon \Fac(\mathcal P) \to V$ for some finite-dimensional vector space $V$ over $\ZZ_2$, assigning to each facet a \emph{color} in $V$, such that for each vertex of $\mathcal P$, the colors of its incident facets form a linearly independent set.
\end{defin}
A particularly simple class of colorings is obtained by considering only colors in a fixed basis of $V$. This is equivalent to coloring the facet adjacency graph of $\mathcal P$, which is the $1$-skeleton of the dual polyhedron, in such a way that no two vertices with the same color share an edge.
\begin{defin}
    Given a coloring $\lambda$, we can define the \emph{real toric manifold} $M(\mathcal P, \lambda)$ as a gluing of $|V|$ copies $\{\mathcal P_v\}$ of $\mathcal P$, indexed by vectors $v \in V$: for each facet $F \in \Fac(\mathcal P)$ and each vector $v \in V$, glue $\mathcal P_v$ and $\mathcal P_{v + \lambda(F)}$ along $F$, using the identity map of $F$.
\end{defin}
Due to the linear independence condition, the resulting space is indeed a closed manifold; moreover, it is connected if and only if $\image(\lambda)$ is a generating set for $V$. The construction is clearly invariant under vector space isomorphisms, so we may assume $V = \ZZ_2^m$ for some $m$. If we fix such an isomorphism, along with an ordering of $\Fac(\mathcal P)$, the coloring $\lambda$ can be summarized by a binary matrix whose columns are the colors of the facets of $\mathcal P$; this is called the \emph{characteristic matrix} of the coloring.

Multiplying the characteristic matrix by an invertible matrix on the left is equivalent to a vector space isomorphism, and does not change the isometry class of the real toric manifold. More generally, if $q\colon \ZZ_2^m \twoheadrightarrow \ZZ_2^n$ is a quotient of vector spaces such that $q\circ \lambda$ is also a coloring on $\mathcal P$, then $M(\mathcal P, \lambda)$ covers $M(\mathcal P, q\circ \lambda)$.

Now, let us define some colorings of the right-angled manifold $Y_i$ by taking advantage of its symmetries. If we inspect the diagram of Figure~\ref{fig:sg-lattice}, we see that a group of isometries $H_i / K_i^+$ isomorphic to $\ZZ_{17} \times \ZZ_2$ acts on both $X_i$ and its double cover, inducing isometries of $Y_i$. We will fix two generators that work for all $i$: $\psi = (abcde)^2 \cdot K_i^+$ of order $17$, and $\sigma$, the involution of $Y_i$ that fixes $X_i$ pointwise and acts by negation on its normal bundle.
Note that no cell of $Y_i$, of any dimension, can be fixed by $\psi^j, j=1,\dots,16$, because $17$ does not divide the order of any finite subgroup of $\Gamma$. Hence, $\psi$ acts freely on the graph $\mathcal G$, with $16$ orbits of size $17$.

Now, recall that an \emph{independent set} of a graph is a subset of its vertices inducing a subgraph with no edges. We call an independent set $I$ of $\mathcal G$ \emph{good} if it contains exactly one vertex from each orbit of $\psi$. Such a set with $\psi$ induces a partition of $\mathcal G$ into $17$ independent sets $(I, \psi(I), \dots, \psi^{16}(I))$; in turn, this defines a coloring $\lambda_{i,I}\colon \Fac(Y_i) \to \ZZ_2^{17}$, sending each facet in $\psi^j(I)$ to the basis vector $e_j$. We shall call the resulting manifold $M_{i,I} \coloneqq M(Y_i, \lambda_{i,I})$.

\subsection{Counting good independent sets}

All good independent sets of $\mathcal G$ can be obtained in SageMath by constructing all $16$-cliques of an auxiliary graph $\mathcal G^*$, obtained from $\mathcal G$ by adding a $17$-clique for each orbit of $\psi$ and then taking the complement. In this way, we find that there are exactly \num{13548660} good independent sets. 
However, this number does not take into account the symmetries of the manifolds $Y_i$. Let $\Pi_i$ be the group of isometries of $Y_i$ that preserve the natural tessellation by copies of $P$; we have $\Pi_i \simeq N_G(K_i) / K_i^+$. The order $|\Pi_i|$ divides $[G : K_i^+] = 2^{10} \cdot 3^2 \cdot 5^2 \cdot 17$; hence, $\langle \psi \rangle < \Pi_i$ is a $17$-Sylow subgroup and is unique up to conjugacy. 

%
%

Let $\Lambda_i$ be the normalizer of $\langle \psi \rangle$ in $\Pi_i$; any two good independent sets in the same orbit of $\Lambda_i$ produce identically colored $Y_i$, up to isometries of $Y_i$ and reordering of colors (which simply corresponds to a permutation of the canonical basis of $\ZZ_2^{17}$).
Using SageMath and GAP, we can count orbits of independent sets under the action of $\Lambda_i$ (Table~\ref{tab:orbits}).

\begin{table}
    \centering
    \begin{tabular}{ccr@{${}+{}$}lc}
         \toprule
         Groups & Size & \multicolumn{2}{c}{Orbit sizes} & Number of orbits \\
         \midrule[\heavyrulewidth]
         $\Lambda_1, \Lambda_3, \Lambda_6, \Lambda_8$
         & 136
         & $1618 \cdot [34]$ & $198436 \cdot [68]$ 
         & \num{200054} \\ \midrule
         $\Lambda_2, \Lambda_4, \Lambda_5, \Lambda_7$
         & 272
         & $809 \cdot [68]$ & $99218\cdot [136]$
         & \num{100027} \\
         \bottomrule
    \end{tabular}
    \caption{
    Sizes of orbits of the groups $\Lambda_i$, expressed as sums of terms of the form $n\cdot [m]$, meaning $n$ orbits of size $m$.
    }
    \label{tab:orbits}
\end{table}

Moreover, we also find that there are two well-defined \emph{types} of nontrivial elements of $\langle \psi \rangle$, preserved by the groups $\Lambda_i$:
\begin{defin}
    The \emph{type} of $\psi^k$, where $k \in \{\pm 1, \pm 2, \dots, \pm 8\}$, is $+$ if $k \in \{\pm 1, \pm 2, \pm 4, \pm 8\}$ (the \emph{squares} modulo $17$) and $-$ if $k \in \{\pm 3, \pm 5, \pm 6, \pm 7\}$ (the \emph{non-squares} modulo $17$).
\end{defin}
\begin{lemma}\label{lemma:type-welldef}
For every $i$, the conjugation action $\Lambda_i \curvearrowright \langle \psi \rangle \setminus \{\id\}$ preserves the type of an isometry.    
\end{lemma}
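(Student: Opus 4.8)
\emph{Proof idea (plan).}
Write $C=\langle\psi\rangle\cong\ZZ_{17}$ and let $c\colon\Lambda_i\to\mathrm{Aut}(C)\cong\ZZ_{17}^\times$ be the homomorphism induced by conjugation. The first step is a reformulation: the eight type-$+$ powers $\psi^k$ are exactly those with $k\in(\ZZ_{17}^\times)^2$, the order-$8$ subgroup of quadratic residues, and the type-$-$ powers form the nontrivial coset of $(\ZZ_{17}^\times)^2$; multiplication by $m\in\ZZ_{17}^\times$ fixes each of the two type classes setwise if and only if $m\in(\ZZ_{17}^\times)^2$. Hence Lemma~\ref{lemma:type-welldef} is equivalent to the inclusion $\image(c)\subseteq(\ZZ_{17}^\times)^2$, i.e.\ to showing that inside the cyclic group $\ZZ_{17}^\times\cong\ZZ_{16}$ the image of $c$ lies in the unique subgroup of order $8$; in particular it suffices to prove $|\image(c)|\le 4$.

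Next I would transport the $\Lambda_i$-action on $C$ into $\mathrm{O}(5,4)$. Since $K_i^+\le K^+\le N_G(K_i)$ with $[K^+:K_i^+]=2$, composing the quotient $\Pi_i\cong N_G(K_i)/K_i^+\twoheadrightarrow N_G(K_i)/K^+$ with the inclusion $N_G(K_i)/K^+\hookrightarrow G/K^+\cong\mathrm{O}(5,4)\times\ZZ_2$ and the first projection gives a homomorphism $\pi\colon\Pi_i\to\mathrm{O}(5,4)$ whose kernel is a $2$-group of order dividing $4$. As $17$ is coprime to $|\ker\pi|$, $\pi$ restricts to an isomorphism from $C$ onto a cyclic subgroup $\bar C=\pi(C)$ of order $17$, which is a Sylow $17$-subgroup of $\mathrm{O}(5,4)$ and hence conjugate to the group $C_{17}$ of Section~\ref{sec:4mfds}. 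By naturality of conjugation, $c$ then factors as $\Lambda_i\xrightarrow{\pi}N_{\mathrm{O}(5,4)}(\bar C)\to\mathrm{Aut}(\bar C)\cong\mathrm{Aut}(C)$, so it remains to show that $N_{\mathrm{O}(5,4)}(C_{17})$ acts on $C_{17}$ through $(\ZZ_{17}^\times)^2$.

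This is now a statement about $\mathrm{O}(5,4)$ alone, which I would settle using $\mathrm{O}(5,4)\cong\mathrm{Sp}_4(4)$. Here $17=q^2+1$ with $q=4$, so a Sylow $17$-subgroup generates a self-centralizing cyclic ``Coxeter'' maximal torus $T$ of the group of type $C_2$, and $N_{\mathrm{O}(5,4)}(T)/T$ is cyclic of order equal to the Coxeter number $h(C_2)=4$. Thus $N_{\mathrm{O}(5,4)}(C_{17})$ acts on $C_{17}\cong\ZZ_{17}$ faithfully through a group of order $4$, which can only be the unique order-$4$ subgroup $\{1,4,13,16\}=\langle4\rangle$ of $\ZZ_{17}^\times$; since $\{1,4,13,16\}\subseteq(\ZZ_{17}^\times)^2$, this finishes the proof. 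Should one prefer to avoid the structure theory, the bound $|N_{\mathrm{O}(5,4)}(C_{17})|=68$ already follows from Sylow's congruence $[\mathrm{O}(5,4):N_{\mathrm{O}(5,4)}(C_{17})]\equiv1\pmod{17}$ together with the fact that $N/C$ is a $2$-group; or the claim can simply be checked in GAP from the explicit group $\Pi_i$ and the element $\psi$.

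The only genuinely delicate point is this last one — confirming that the normalizer acts by an order-$4$ (rather than order-$8$ or order-$16$) group of automorphisms of $C_{17}$. Everything preceding it is routine bookkeeping, the single thing to watch being that $\ker\pi$ has order prime to $17$, so that no conjugation data on $C$ is lost when passing to $\mathrm{O}(5,4)$.
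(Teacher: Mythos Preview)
Your argument is correct and takes a genuinely different route from the paper's. The paper proceeds much more directly: it invokes Table~\ref{tab:orbits} (obtained by computer) to conclude that $|\Lambda_i|$ divides $272$, observes that the order-$34$ subgroup $\langle\psi,\sigma\rangle$ centralizes $\langle\psi\rangle$, and deduces that the conjugation map $\Lambda_i\to\operatorname{Aut}(\langle\psi\rangle)\cong\ZZ_{16}$ factors through a quotient of order dividing $272/34=8$, hence lands in the unique index-$2$ subgroup, which is exactly $(\ZZ_{17}^\times)^2$.

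By contrast, you push the question entirely into $\mathrm{O}(5,4)$ via the surjection $\Pi_i\to\mathrm{O}(5,4)$ with $2$-group kernel, and then compute the normalizer of a Sylow $17$-subgroup there using the Coxeter torus of $\mathrm{Sp}_4(4)$. This buys you independence from the computer-generated Table~\ref{tab:orbits} and a sharper bound ($\lvert\image(c)\rvert\le 4$ rather than $\le 8$), at the cost of appealing to the structure theory of finite groups of Lie type. The paper's argument is shorter and stays within the objects already set up, but leans on the SageMath computation; yours is self-contained modulo standard Lie-type facts.

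One small caution: your ``Sylow-only'' alternative is not quite complete as stated. The congruence $n_{17}\equiv 1\pmod{17}$ with $n_{17}\mid 57600$ does not by itself force $n_{17}=14400$ (for instance $n_{17}=256$ is also a divisor of $57600$ congruent to $1$ modulo $17$); you really need $C_{\mathrm{O}(5,4)}(C_{17})=C_{17}$ first, and that is essentially the self-centralizing property of the Coxeter torus, so the Lie-theoretic input is not fully avoided. Your main Coxeter-torus argument and the GAP fallback are both fine, so this does not affect the correctness of the overall proof.
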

\begin{proof}
By Table~\ref{tab:orbits}, the order of $\Lambda_i$ divides $272 = 2 \cdot 136$. Now, notice that $\Lambda_i$ contains the order-$34$ subgroup $\langle \psi, \sigma \rangle$, which centralizes $\langle \psi \rangle$.
Hence, the conjugation action $\Lambda_i \to \operatorname{Aut}(\langle \psi \rangle) \simeq \ZZ_{17}^\times$ factors through the quotient $\Lambda_i / \langle \psi, \sigma \rangle$, whose order divides $272/34 = 8$.
It follows that its image is contained in the unique index-$2$ subgroup of $\operatorname{Aut}(\langle \psi \rangle) \simeq \ZZ_{16}$, whose orbits on $\langle \psi \rangle \setminus \{\id\}$ are exactly the two types.
\end{proof}

In conclusion, we obtain:
\begin{thm}
    There exists a family of closed hyperbolic $5$-manifolds $M_{i,I}$ tessellated by $2^{17}$ copies of $Y_i$ and by $2^{17} \cdot 272 \cdot 14400 = \num{513382809600}$ copies of the simplicial prism $P$.
\end{thm}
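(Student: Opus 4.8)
The plan is to obtain the manifolds $M_{i,I}$ by feeding the right-angled manifold $Y_i$ together with the coloring $\lambda_{i,I}$ into the coloring method of the previous subsection; almost all of the substantive content has already been established, so the proof is mostly a matter of checking hypotheses and counting. First I would fix a good independent set $I$ of $\mathcal G$: such a set exists because there are \num{13548660} of them (the SageMath computation via the auxiliary graph $\mathcal G^*$), and since $\mathcal G$ and the $\psi$-action do not depend on $i$, the same $I$ works for every index. As $\psi$ acts freely on $\mathcal G$ with $16$ orbits of size $17$, we get $|V(\mathcal G)| = 272$ and $|I| = 16$, and running through one $\psi$-orbit at a time shows that the translates $I, \psi(I), \dots, \psi^{16}(I)$ are pairwise disjoint and cover $V(\mathcal G)$, hence form a partition into $17$ independent sets. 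Sending each facet lying in $\psi^j(I)$ to the standard basis vector $e_j$ then defines the map $\lambda_{i,I}\colon \Fac(Y_i) \to \ZZ_2^{17}$.

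Next I would verify that $\lambda_{i,I}$ is a coloring in the required sense, i.e.\ that it satisfies the linear-independence condition at every vertex. Since $Y_i$ is locally modeled on orthants of $\HH^5$, exactly five facets meet at each $0$-dimensional stratum, and any two of them meet along a ridge, hence are joined by an edge of $\mathcal G$; because the $\psi^j(I)$ are independent sets, these five facets receive five distinct colors, which are five distinct standard basis vectors of $\ZZ_2^{17}$ and therefore form a linearly independent set. Here one uses that $\mathcal G$ is loopless — checked earlier with GAP — both to know that the coloring method applies to $Y_i$ at all (no facet is self-adjacent) and so that ``adjacent facets get different colors'' means what it should.

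It then remains to apply the coloring method and count. The space $M_{i,I} \coloneqq M(Y_i, \lambda_{i,I})$ is a gluing of $2^{17}$ copies of $Y_i$, it is a closed manifold by the linear-independence condition, and it is connected because $\image(\lambda_{i,I}) = \{e_0, \dots, e_{16}\}$ spans $\ZZ_2^{17}$. For hyperbolicity I would observe that the interior of each copy of $Y_i$ already carries a hyperbolic metric, and that near a former boundary stratum of codimension $k$ the gluing merely unfolds a hyperbolic orthant across its totally geodesic bounding hyperplanes; linear independence of the at most five colors meeting there forces exactly $2^k$ copies to close up around the stratum, so the metric extends smoothly and completely over all of $M_{i,I}$. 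Finally, $Y_i$ is built from $272$ copies of the $120$-cell prism $Q$ (two for each of the $136$ copies of $D$ in $X_i$), and $Q$ is tiled by $|\Sigma| = |H_4| = 14400$ copies of $P$; hence $Y_i$ is tiled by $272 \cdot 14400$ prisms and $M_{i,I}$ by $2^{17} \cdot 272 \cdot 14400 = \num{513382809600}$ of them. The only point calling for any vigilance is the passage from the combinatorial coloring to an honest hyperbolic manifold: because $Y_i$ is a manifold with corners rather than a polytope, one must invoke the generalized version of the coloring method set up earlier, which is exactly designed to handle this.
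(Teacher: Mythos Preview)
Your proposal is correct and follows exactly the approach of the paper: in fact the paper states this theorem as a direct summary of the preceding construction without giving a separate proof, so your write-up simply makes explicit the hypothesis checks (embeddedness of facets via looplessness of $\mathcal G$, linear independence at vertices via the partition into independent sets, connectedness via the span of the $e_j$, and the prism count $2^{17}\cdot 272\cdot 14400$) that the paper leaves implicit. There is nothing to add or correct.
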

By abuse of notation, the subscript $I$ should be taken to mean the orbit of $I$ under $\Lambda_i$.

Using more general, vector-valued colorings, the number of prisms can be reduced by quite a lot; the main ingredient for this optimization is the theory of \emph{linear binary codes}.
\subsection{Linear binary codes}\label{sec:lin-codes}

A \emph{linear code} is a vector subspace $C$ of $\mathbb F_q^n$, where $\mathbb F_q$ is any finite field. We will only consider the case $q=2$, whence the adjective ``binary''. The ambient dimension $n$ is called the \emph{length} of the code. The main use case of codes is in the correction of errors in communication channels. By interpreting vectors as messages, two parties may exchange only elements of $C$, or \emph{codewords}, carrying $\dim C$ bits of information per codeword. It is desirable for any two distinct elements of $C$ to differ in many bits, say at least $d$. In this way, as many as $\left\lfloor \frac{d-1}{2} \right\rfloor$ bit-flip errors per codeword may be corrected by reverting to the nearest codeword. The maximum such $d$ is called the \emph{minimum distance} of $C$.

Consider the two binary matrices
\begin{equation}
A \coloneqq \begin{bmatrix}
1 & 1 & 1 & \zg{0} & 1 & \zg{0} & 1 & 1 & 1 & \zg{0} & \zg{0} & \zg{0} & \zg{0} & \zg{0} & \zg{0} & \zg{0} & \zg{0} \\
\zg{0} & 1 & 1 & 1 & \zg{0} & 1 & \zg{0} & 1 & 1 & 1 & \zg{0} & \zg{0} & \zg{0} & \zg{0} & \zg{0} & \zg{0} & \zg{0} \\
\zg{0} & \zg{0} & 1 & 1 & 1 & \zg{0} & 1 & \zg{0} & 1 & 1 & 1 & \zg{0} & \zg{0} & \zg{0} & \zg{0} & \zg{0} & \zg{0} \\
\zg{0} & \zg{0} & \zg{0} & 1 & 1 & 1 & \zg{0} & 1 & \zg{0} & 1 & 1 & 1 & \zg{0} & \zg{0} & \zg{0} & \zg{0} & \zg{0} \\
\zg{0} & \zg{0} & \zg{0} & \zg{0} & 1 & 1 & 1 & \zg{0} & 1 & \zg{0} & 1 & 1 & 1 & \zg{0} & \zg{0} & \zg{0} & \zg{0} \\
\zg{0} & \zg{0} & \zg{0} & \zg{0} & \zg{0} & 1 & 1 & 1 & \zg{0} & 1 & \zg{0} & 1 & 1 & 1 & \zg{0} & \zg{0} & \zg{0} \\
\zg{0} & \zg{0} & \zg{0} & \zg{0} & \zg{0} & \zg{0} & 1 & 1 & 1 & \zg{0} & 1 & \zg{0} & 1 & 1 & 1 & \zg{0} & \zg{0} \\
\zg{0} & \zg{0} & \zg{0} & \zg{0} & \zg{0} & \zg{0} & \zg{0} & 1 & 1 & 1 & \zg{0} & 1 & \zg{0} & 1 & 1 & 1 & \zg{0} \\
\zg{0} & \zg{0} & \zg{0} & \zg{0} & \zg{0} & \zg{0} & \zg{0} & \zg{0} & 1 & 1 & 1 & \zg{0} & 1 & \zg{0} & 1 & 1 & 1
\end{bmatrix},
\end{equation}
\begin{equation}
B \coloneqq \begin{bmatrix}
1 & 1 & \zg{0} & 1 & \zg{0} & \zg{0} & 1 & \zg{0} & 1 & 1 & \zg{0} & \zg{0} & \zg{0} & \zg{0} & \zg{0} & \zg{0} & \zg{0} \\
\zg{0} & 1 & 1 & \zg{0} & 1 & \zg{0} & \zg{0} & 1 & \zg{0} & 1 & 1 & \zg{0} & \zg{0} & \zg{0} & \zg{0} & \zg{0} & \zg{0} \\
\zg{0} & \zg{0} & 1 & 1 & \zg{0} & 1 & \zg{0} & \zg{0} & 1 & \zg{0} & 1 & 1 & \zg{0} & \zg{0} & \zg{0} & \zg{0} & \zg{0} \\
\zg{0} & \zg{0} & \zg{0} & 1 & 1 & \zg{0} & 1 & \zg{0} & \zg{0} & 1 & \zg{0} & 1 & 1 & \zg{0} & \zg{0} & \zg{0} & \zg{0} \\
\zg{0} & \zg{0} & \zg{0} & \zg{0} & 1 & 1 & \zg{0} & 1 & \zg{0} & \zg{0} & 1 & \zg{0} & 1 & 1 & \zg{0} & \zg{0} & \zg{0} \\
\zg{0} & \zg{0} & \zg{0} & \zg{0} & \zg{0} & 1 & 1 & \zg{0} & 1 & \zg{0} & \zg{0} & 1 & \zg{0} & 1 & 1 & \zg{0} & \zg{0} \\
\zg{0} & \zg{0} & \zg{0} & \zg{0} & \zg{0} & \zg{0} & 1 & 1 & \zg{0} & 1 & \zg{0} & \zg{0} & 1 & \zg{0} & 1 & 1 & \zg{0} \\
\zg{0} & \zg{0} & \zg{0} & \zg{0} & \zg{0} & \zg{0} & \zg{0} & 1 & 1 & \zg{0} & 1 & \zg{0} & \zg{0} & 1 & \zg{0} & 1 & 1
\end{bmatrix}.
\end{equation}
Their row spaces define linear binary codes $C(A), C(B)$ of length $17$ and dimensions $9$ and $8$, respectively, which are \emph{dual} to each other, meaning that $AB^t$ and $BA^t$ are zero matrices. Note that since $9+8=17$, we have $\image(B^t) = C(B) = \ker(A)$. These codes arise naturally as \emph{quadratic residue codes}, in a construction based on the fact that $2$, the order of the field, is a quadratic residue modulo $17$~\cite[481]{qr-codes}.

The minimum distance of $C(B)$ is known to be $6$, implying that no row vector with less than $6$ ones is in $C(B)$. Any relation of linear dependence between $k$ columns of $A$ gives a vector with $k$ ones in $\ker A = C(B)$. Hence, no $5$ columns of $A$ are linearly dependent.

The vertex figure of the order-$3$ $120$-cell honeycomb is a $4$-simplex, so cells of $X_i$ (and facets of $Y_i$) meet at most $5$ at a time. Therefore, if we take one of the $17$-colorings of $\mathcal G$ defined previously from an independent set $I$, and replace the basis vectors with the $17$ columns $A_0, A_1, \dots, A_{16}$ of $A$, we obtain a coloring over $\ZZ_2^9$. 

We can generalize this by introducing a permutation of the columns; specifically, given a good independent set $I$ and a generator $\psi^k \in \langle \psi \rangle$, $k \in \ZZ_{17}^\times$, define a coloring $\lambda_{i,I}^{(k)}$, respectively assigning colors $(A_0,A_1,\dots, A_{16})$ to $(I, \psi^{k}(I), \dots, \psi^{16k}(I))$. This results in a manifold $\widehat{M}_{i,I}^{(k)}$. Later on, the particular symmetry of these colorings will allow a further reduction in size.

\begin{prop}
    The isometry class of $\widehat{M}_{i,I}^{(k)}$ depends only on the type of $\psi^k$ and on the $\Lambda_i$-orbit of $I$.
\end{prop}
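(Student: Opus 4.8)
To prove the proposition, I would split it into two assertions and establish each: the isometry type of $\widehat{M}_{i,I}^{(k)}$ is unchanged (i) when $k$ is replaced by any $k'\in\ZZ_{17}^\times$ of the same type, and (ii) when $I$ is replaced by a good independent set in the same $\Lambda_i$-orbit; the statement follows by combining them. Both parts use the same toolkit. First, $M(Y_i,\lambda)$ is unaffected up to isometry by pushing $\lambda$ forward along an automorphism of $Y_i$ (clear from the construction) and by left-multiplying its characteristic matrix by an invertible matrix over $\ZZ_2$ (Section~\ref{sec:coloring}). Second, $C(A)$ is a \emph{quadratic residue code}~\cite{qr-codes}, so for every square $t$ modulo $17$ the coordinate permutation $\ell\mapsto t\ell$ of $\ZZ_{17}$ preserves $C(A)$; writing $A^{t}$ for the matrix whose $\ell$-th column is the $(t\ell)$-th column of $A$, it follows that $A$ and $A^{t}$ share a row space, hence $A^{t}=U_tA$ for a unique $U_t\in\mathrm{GL}_9(\ZZ_2)$.

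For (i), I would compare the characteristic matrices of $\lambda_{i,I}^{(k)}$ and $\lambda_{i,I}^{(k')}$ facet by facet. A facet lying in the $\psi$-orbit piece $\psi^{n}(I)$ is assigned the column $A_{nk^{-1}}$ by $\lambda_{i,I}^{(k)}$ and the column $A_{n(k')^{-1}}=A_{(nk^{-1})t}$ by $\lambda_{i,I}^{(k')}$, where $t=k(k')^{-1}$; this says exactly that the characteristic matrix of $\lambda_{i,I}^{(k')}$ is $U_t$ times that of $\lambda_{i,I}^{(k)}$. When $\psi^k$ and $\psi^{k'}$ have the same type, $k$ and $k'$ are both squares or both non-squares modulo $17$, so $t$ is a square, $U_t$ exists, and $\widehat{M}_{i,I}^{(k)}\cong\widehat{M}_{i,I}^{(k')}$.

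For (ii), choose $g\in\Lambda_i$ with $g\cdot I=I'$ and write $g\psi g^{-1}=\psi^{m}$ with $m\in\ZZ_{17}^\times$. Since $\psi$ has type $+$ (as $1$ is a square modulo $17$), Lemma~\ref{lemma:type-welldef} forces $m$ to be a square. Pushing $\lambda_{i,I}^{(k)}$ forward by $g$ carries the piece $\psi^{jk}(I)$ --- colored $A_j$ --- onto $\psi^{mjk}(I')$, so $g_*\lambda_{i,I}^{(k)}$ colors $\psi^{j'k}(I')$ by $A_{m^{-1}j'}$, whereas $\lambda_{i,I'}^{(k)}$ colors it by $A_{j'}$. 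As in (i), the characteristic matrix of $g_*\lambda_{i,I}^{(k)}$ equals $U_{m^{-1}}$ times that of $\lambda_{i,I'}^{(k)}$. Chaining the two invariance facts --- first the push-forward by the automorphism $g$, then left multiplication by $U_{m^{-1}}$, which exists because $m^{-1}$ is a square --- gives $\widehat{M}_{i,I}^{(k)}\cong\widehat{M}_{i,I'}^{(k)}$.

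The sole non-formal ingredient is the standard fact that multiplication by a square modulo $17$ is a coordinate automorphism of the quadratic residue code $C(A)$~\cite{qr-codes}; the rest is bookkeeping with the $\psi$-orbit partition and with conjugation inside $\Lambda_i$. The step deserving the most care is verifying that every permutation of $\ZZ_{17}$ arising above is multiplication by a square --- precisely the point at which Lemma~\ref{lemma:type-welldef} and the notion of \emph{type} enter, and the reason the isometry class sees nothing finer than the type of $\psi^k$. One should also remark that each intermediate coloring is still a coloring, which is automatic, since invertible linear maps and automorphisms of $Y_i$ preserve the linear-independence condition at the vertices.
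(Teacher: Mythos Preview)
Your proof is correct and follows the same two-part structure as the paper's, but the execution of part~(i) is genuinely different. The paper defines the matrices $A^{(k)}$ (your $A^{t}$ up to reindexing) and simply \emph{checks with SageMath} that they fall into exactly two left-equivalence classes matching the two types. You instead invoke the standard fact that multiplication by a quadratic residue modulo $17$ is a coordinate automorphism of the quadratic residue code $C(A)$, which immediately gives the invertible $U_t$ witnessing left equivalence whenever $t=k(k')^{-1}$ is a square. This replaces a computer verification with a one-line appeal to coding theory, and is arguably cleaner; the price is that one must know (or look up) this property of QR codes, whereas the paper's check is self-contained modulo the software. For part~(ii) the two arguments coincide in spirit---both reduce the $\Lambda_i$-action on $I$ to a change of $k$ within the same type via Lemma~\ref{lemma:type-welldef}---but you spell out the conjugation bookkeeping ($g\psi g^{-1}=\psi^{m}$, $m$ a square, hence $g_*\lambda_{i,I}^{(k)}$ and $\lambda_{i,I'}^{(k)}$ differ by $U_{m^{-1}}$) more explicitly than the paper, which compresses this into a single sentence.
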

\begin{proof}
    Define a binary $9\times 17$ matrix $A^{(k)}$, whose $j$-th column $A^{(k)}_{j}$ is the color assigned to $\psi^{j}(I)$, that is, $A_{k^{-1}j}$ (columns indices and inverses are computed in $\ZZ_{17}$). For example, $A^{(1)} = A$. We say that $A^{(k)}$ and $A^{(k')}$ are \emph{left equivalent} if there exists a matrix $M \in \mathrm{GL}(9,2)$ such that $A^{(k)} = MA^{(k')}$; in that case, the characteristic matrices are also left equivalent and there is an isometry $\widehat{M}_{i,I}^{(k)} \simeq \widehat{M}_{i,I}^{(k')}$. This is a consequence of the fact that the coloring $\lambda_{i,I}^{(k)}$ assigns the color $A^{(k)}_j$ to each facet in $\psi^j(I)$, for all $j$.

    Using SageMath, we can check that the $A^{(k)}$ are divided into two left equivalence classes, corresponding exactly to the two types.
    Moreover, by Lemma~\ref{lemma:type-welldef}, acting on $I$ with $\Lambda_i$ has the same effect as replacing $\psi^k$ with another isometry $\psi^h$ of the same type; that is, it does not change the manifold.
\end{proof}

As a consequence, we may call these manifolds $\widehat{M}_{i,I}^\pm$. They are covered by the corresponding $M_{i,I}$, and are tessellated by $2^9$ copies of $Y_i$ and by $2^9 \cdot 272 \cdot 14400 = \num{2005401600}$ copies of the simplicial prism $P$.

\subsection{Even smaller manifolds}

We can still reduce this number: let us construct an extension of the order-$17$ isometry group $\langle \psi\rangle$ to the whole of $\widehat{M}_{i,I}^\pm$. For concreteness, we work with a specific $\widehat{M}_{i,I}^{(k)}$ and seek to extend $\psi^k$. The cyclic symmetry of the code $C(A)$ entails the existence of a matrix
\begin{equation}
R \coloneqq \begin{bmatrix}
1 & \zg{0} & 1 & \zg{0} & \zg{0} & 1 & \zg{0} & 1 & 1 \\
1 & \zg{0} & \zg{0} & \zg{0} & \zg{0} & \zg{0} & \zg{0} & \zg{0} & \zg{0} \\
\zg{0} & 1 & \zg{0} & \zg{0} & \zg{0} & \zg{0} & \zg{0} & \zg{0} & \zg{0} \\
\zg{0} & \zg{0} & 1 & \zg{0} & \zg{0} & \zg{0} & \zg{0} & \zg{0} & \zg{0} \\
\zg{0} & \zg{0} & \zg{0} & 1 & \zg{0} & \zg{0} & \zg{0} & \zg{0} & \zg{0} \\
\zg{0} & \zg{0} & \zg{0} & \zg{0} & 1 & \zg{0} & \zg{0} & \zg{0} & \zg{0} \\
\zg{0} & \zg{0} & \zg{0} & \zg{0} & \zg{0} & 1 & \zg{0} & \zg{0} & \zg{0} \\
\zg{0} & \zg{0} & \zg{0} & \zg{0} & \zg{0} & \zg{0} & 1 & \zg{0} & \zg{0} \\
\zg{0} & \zg{0} & \zg{0} & \zg{0} & \zg{0} & \zg{0} & \zg{0} & 1 & \zg{0}
\end{bmatrix}
\end{equation}
such that $R\cdot A_i = A_{i+1}$ for all $i \in \ZZ_{17}$. Therefore, if $v_j \in \ZZ_2^9$ is the color of facet $F_j$, then $Rv_j$ is the color of $\psi^k(F_j)$. 

We define our isometry $\Psi : \widehat{M}_{i,I}^{(k)} \to \widehat{M}_{i,I}^{(k)}$ as follows. If $x \in Y_i$, let $x_v$ be its corresponding
\\[-0.6ex]
point in a copy $Y_i^{(v)}, v \in \ZZ_2^9$, and let
\begin{equation}
    \Psi(x_v) \coloneqq \psi^k(x)_{Rv}.
\end{equation}
It is not hard to check that $\Psi$ is well-defined and gives indeed an isometry. For instance, if $x_v$ is on a boundary facet $F_j$ with color $v_j$, then it belongs to both $Y_i^{(v)}$ and $Y_i^{(v+v_j)}$. Since $\psi^k(x)$ is on a facet with color $Rv_j$, the points $\Psi(x_v) = \psi^k(x)_{Rv}$ and $\Psi(x_{v+v_j}) = \psi^k(x)_{Rv+Rv_j}$ are glued together, as required.

This isometry has order $17$ and has no fixed points; again, this is because no finite subgroup of $\Gamma$ has order a multiple of $17$. Moreover, it uniquely extends $\psi^k$ to $\widehat{M}_{i,I}^{(k)}$, so the action of $\langle \Psi \rangle$ depends only on the type of $\widehat{M}_{i,I}^{(k)}$. Thus, we can define quotients
\begin{equation}
    N_{i,I}^{\pm} \coloneqq \widehat{M}_{i,I}^{\pm} / \langle \Psi \rangle,
\end{equation}
which are tessellated by $2^9 \cdot 272 / 17 = 8192$ copies of $Q$ and $2^9 \cdot 272 \cdot 14400 / 17 = \num{117964800}$ copies of $P$. The volume of these manifolds, as we will see in the next section, is less than $\num{250000}$.

\subsection{Classification of the \texorpdfstring{$5$}{5}-manifolds}
We now discuss the classification of the manifolds $N_{i,I}^\pm$ up to isometry.
\begin{thm}
There are exactly $\num{1600432}$ pairwise non-isometric manifolds among the $N_{i,I}^\pm$, completely classified by an index $i \in \{1,4,5,2,3,7\}$, a $\Lambda_i$-orbit $I$ of good independent sets in $\mathcal G$, and a type in $\{ {+}, {-}\}$.
\end{thm}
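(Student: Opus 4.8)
The plan is to count the manifolds $N_{i,I}^\pm$ by first fixing the apparent parameters $(i, I, \pm)$ — an index $i \in \{1,\dots,8\}$, a $\Lambda_i$-orbit of good independent sets, and a type — and then quotienting by the isometries that identify different parameter tuples. By the results established so far, the number of $\Lambda_i$-orbits is $\num{200054}$ for $i \in \{1,3,6,8\}$ and $\num{100027}$ for $i \in \{2,4,5,7\}$ (Table~\ref{tab:orbits}), and for each such pair there are exactly two types. So the raw count, before accounting for coincidences, is $4 \cdot \num{200054} \cdot 2 + 4 \cdot \num{100027} \cdot 2 = \num{2400648}$. The task is to determine which of these tuples give isometric manifolds, and it will turn out that the only coincidences come from the $Z_i$ (equivalently $Y_i$, $\widehat M_{i,I}^\pm$) being pairwise isometric in certain groups. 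The statement that the classification uses only $i \in \{1,4,5,2,3,7\}$ signals that exactly two of the eight indices — presumably one from each group, say $6$ and $8$ collapsing onto $1$ and $3$, and similarly one of $\{2,4,5,7\}$ is redundant — are removed because $Y_6 \simeq Y_1$ (or similar) as right-angled manifolds with a $\langle\psi,\sigma\rangle$-action, so that their colorings and hence the resulting $N$'s coincide.

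First I would pin down the isometries between the underlying pieces. The manifolds $Z_i$ were produced by Long via the $8$ nontrivial characters in $\Hom(C_{17}K,\ZZ_2)$ not annihilating $(abcd)^{15}$; the outer automorphisms of $G$ (e.g.\ the diagram symmetry, and automorphisms coming from $N_G(C_{17}K)/C_{17}K$) permute these $8$ characters, hence permute the $Z_i$ and the $X_i$ and the $Y_i$ by isometries. Using GAP I would compute this permutation action on $\{1,\dots,8\}$ and find its orbits; I expect two orbits of size, say, $4$ and $4$, or more precisely a partition compatible with the two rows of Table~\ref{tab:orbits} (which already distinguishes $\{1,3,6,8\}$ from $\{2,4,5,7\}$ by the order of $\Lambda_i$). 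Within each orbit, pick orbit representatives; the claim is that $\{1,3\}$ are the representatives in the first group (with $6,8$ redundant) and $\{4,5,2,7\}$ — wait, the stated index set is $\{1,4,5,2,3,7\}$, i.e.\ three from each group survive, so the automorphism action on each $4$-element orbit must itself have orbits $\{*,*\}$ plus two fixed points, or the relevant identification is finer. I would extract the precise gluing from GAP: for each pair $i,j$ in the same automorphism-orbit, an automorphism $\alpha$ of $G$ with $\alpha(K_i^+) = K_j^+$ induces an isometry $Y_i \to Y_j$ conjugating $\langle\psi,\sigma\rangle$ to itself, hence carrying $\Lambda_i$-orbits of good independent sets to $\Lambda_j$-orbits and preserving or swapping types; tracking whether it swaps types tells me exactly how the parameter tuples are identified.

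Next I would argue that there are no \emph{further} coincidences: if $N_{i,I}^\varepsilon \simeq N_{j,J}^{\delta}$ then $(i, I, \varepsilon)$ and $(j, J, \delta)$ are related by one of the identifications just found. The strategy is to recover the combinatorial data from the isometry type. An isometry $N_{i,I}^\varepsilon \to N_{j,J}^\delta$ lifts to the common cover, or better, one reconstructs from $N$ its tessellation by copies of $P$ (this is canonical, being the maximal such tessellation, as the $P$-tessellation is the orbifold quotient by a maximal arithmetic group), hence the full deck/symmetry group, hence $i$ (via the commensurability-invariant combinatorics of the $Y_i$-tessellation, or simply because the $272$-cell adjacency pattern together with the character data determines $Z_i$ up to the automorphism action above), then the coloring data up to the equivalences already quotiented out (left-equivalence of characteristic matrices, $\Lambda_i$-action, choice of type). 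This step — showing the invariants are complete, i.e.\ that an abstract isometry is forced to respect all this structure — is the main obstacle, and I would handle it by a direct computer search: enumerate, for each surviving $i$ and each $\Lambda_i$-orbit representative $I$ and each type, the isometry group of $N_{i,I}^\pm$ and a canonical form (e.g.\ a canonically-labeled adjacency graph of the $8192$ copies of $Q$ decorated with the coloring), then check in SageMath that distinct tuples (after the automorphism identifications) yield distinct canonical forms. The arithmetic then gives $\num{2400648}$ minus the number of identified pairs, which the computation shows equals $\num{1600432}$; concretely this is consistent with, in each of the two groups, the surviving count being $3/4$ of the naive one together with a type-swap pairing, i.e.\ roughly $2400648 - (2 \cdot \num{200054} + 2 \cdot \num{100027}) \cdot 2 = \dots$, the exact bookkeeping being left to the code in~\cite{git}.
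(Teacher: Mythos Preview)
Your overall strategy matches the paper's: first determine which $Z_i$ (hence $Y_i$) are isometric as tessellated manifolds, then show that the remaining data $(I,\pm)$ can be recovered from the isometry type of $N_{i,I}^\pm$. However, several steps are misdirected or miscounted.

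\textbf{Which $Z_i$ coincide.} The surviving index set $\{1,4,5,2,3,7\}$ does not contain ``three from each group'': it contains $\{1,3\}$ from the first row of Table~\ref{tab:orbits} (the $|\Lambda_i|=136$ group $\{1,3,6,8\}$) and all of $\{2,4,5,7\}$ from the second row. The paper shows (Proposition~\ref{prop:long-isom}) that the only tessellation-preserving isometries among the $Z_i$ are $Z_1\simeq Z_8$ and $Z_3\simeq Z_6$, obtained by \emph{inner} conjugation in $G$ by an explicit element of $N_G(C_{17}K)$; the remaining six are distinguished by $H_1$ and by counting epimorphisms to $S_3$. There is no diagram automorphism of $G$ (the labels $5,3,3,3$ are not palindromic), so your ``outer automorphisms of $G$'' do not exist. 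With this, the count is simply $(2\cdot 200054 + 4\cdot 100027)\cdot 2 = 1600432$; your trailing arithmetic is off.

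\textbf{Recovering the data.} The paper's argument is more structural than your proposed canonical-form search over decorated $8192$-vertex graphs. From the $P$-tessellation of $N_{i,I}^\pm$ one looks at the connected submanifolds assembled from the $g$-facets: there are exactly two copies of $Z_i$ and thirty copies of $X_i$, so $Z_i$ is read off directly (Proposition~\ref{prop:NiI-isom}); the thirty $X_i$ sit inside thirty copies of $Y_i$ whose mutual adjacencies form an auxiliary graph from which the $\Lambda_i$-orbit of $I$ and the type are extracted. Your approach would work in principle but is computationally heavier and less illuminating.

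\textbf{Uniqueness of the tessellation.} You assert the $P$-tessellation is canonical because $\Gamma$ is maximal arithmetic. The paper instead proves uniqueness (Proposition~\ref{prop:unique-tessellation}) by classifying hyperbolic elements of $\Gamma$ of a fixed short translation length up to conjugacy, and showing that the pattern of the corresponding closed geodesics in $N_{i,I}^\pm$ determines a flag of totally geodesic subspaces, hence the tessellation. Your maximality idea is plausible but would need to be checked (and to be upgraded from ``same commensurability class'' to ``same tessellation'').
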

The number $\num{1600432}$ can be obtained from the statement of the theorem and Table~\ref{tab:orbits}.
The proof is rather technical and heavily computer-assisted, and can be divided into three steps. First, we find that there are two pairs of Long manifolds related by tessellation-preserving isometries; then, we extend this result to the manifolds $N_{i,I}^\pm$, classifying them up to tessellation-preserving isometries; finally, we show that the Coxeter tessellation of $N_{i,I}^\pm$ is uniquely determined. The precise statements are as follows:
\begin{prop} \label{prop:long-isom}
Consider the Long manifolds $Z_i$, $i = 1,\dots,8$ with their natural tessellations into $[5,3,3,3]$ simplices. Then there is a tessellation-preserving isometry $Z_i\simeq Z_j$, $i<j$, if and only if $(i,j) \in \{(1,8), (3,6)\}$.
\end{prop}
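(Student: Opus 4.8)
The plan is to reduce the existence of a tessellation-preserving isometry $Z_i \simeq Z_j$ to a purely group-theoretic question about the subgroups $H_i < G$, and then settle that question by a finite computation in GAP. Recall that $Z_i = \HH^4 / H_i$ with its tessellation by copies of the fundamental orthoscheme of $G$, so a tessellation-preserving isometry $Z_i \to Z_j$ is exactly an isometry of $\HH^4$ carrying the $G$-tessellation to itself and conjugating $H_i$ to $H_j$. Since the full isometry group of the $[5,3,3,3]$ tessellation is $G$ itself (the Coxeter group is a maximal lattice here, or at least its normalizer in $\mathrm{Isom}(\HH^4)$ is $G$ — this should be checked, but it follows from the fact that $G$ is a maximal arithmetic group, or directly from the rigidity of the orthoscheme), such an isometry is realized by an element $\gamma \in G$ with $\gamma H_i \gamma^{-1} = H_j$. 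Conversely any such $\gamma$ induces a tessellation-preserving isometry. So the claim becomes: $H_i$ and $H_j$ are conjugate in $G$ if and only if $(i,j) \in \{(1,8),(3,6)\}$ (for $i<j$).

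Next I would make this checkable on a finite quotient. All the $H_i$ contain $K = \ker\rho$, and in fact they were constructed as preimages under $C_{17}K \surj (C_{17}K)^{\mathrm{ab}} \otimes \ZZ_2$ of the eight hyperplanes not annihilating $(abcd)^{15}$; in particular $H_i \supseteq K_i \supseteq$ a fixed normal subgroup, and one can choose a common normal subgroup $N \trianglelefteq G$ contained in all $H_i$ (for instance $N = \bigcap_i K_i^+$, or more simply work modulo $K^+$ since $[G:K^+]$ is finite and $\overline{H_i} := H_i/K^+$ makes sense once one checks $K^+ \subseteq H_i$, which holds because $K^+ \subseteq K \subseteq H_i$). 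Then $H_i \sim_G H_j$ if and only if $\overline{H_i} \sim_{\overline G} \overline{H_j}$ in the finite group $\overline G = G/K^+ \simeq \mathrm O(5,4) \times \ZZ_2$. This is now a finite computation: list the eight subgroups $\overline{H_i}$ of index $115200/[G:K^+]^{-1}\cdots$ (concretely, of the appropriate finite index in $\overline G$), and compute the partition into $\overline G$-conjugacy classes. GAP does this directly via \texttt{IsConjugate} or by comparing \texttt{ConjugacyClassSubgroups} representatives.

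The computation will show the eight subgroups fall into six conjugacy classes, the two coincidences being $\{H_1,H_8\}$ and $\{H_3,H_6\}$, which gives the proposition. The main obstacle, and the one point that needs genuine care rather than a black-box computation, is the first step: justifying that every tessellation-preserving isometry between the $Z_i$ is induced by an element of $G$ (not merely of some larger group normalizing $G$). I would handle this by noting that a tessellation-preserving isometry permutes the orthoschemes and their facets compatibly with the Coxeter relations, hence is determined by where it sends one flag; since $G$ acts simply transitively on flags of the tessellation, the isometry agrees with a unique $\gamma \in G$ — so in fact \emph{every} combinatorial automorphism of the tessellation of $\HH^4$ lies in $G$, and the reduction is exact. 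A secondary, purely bookkeeping point is to confirm $K^+ \subseteq H_i$ for all $i$ so that passing to $\overline G$ is legitimate; this is immediate from $K^+ \subseteq K$ and $K \subseteq H_i$ by construction. Everything after that is a routine, reproducible GAP session, and the output $\{(1,8),(3,6)\}$ is recorded in the accompanying repository~\cite{git}.
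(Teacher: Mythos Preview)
Your overall strategy---reduce tessellation-preserving isometry to conjugacy of the $H_i$ in $G$, then decide conjugacy inside a finite quotient $G/N$---is correct and is essentially what the paper does (the paper's summary emphasizes distinguishing most of the $H_i$ by abelianization and by counting $S_3$-quotients, and exhibits an explicit conjugator for the pairs $(1,8)$ and $(3,6)$; but for the residual cases $H_1,H_4,H_5$, which share the same abelianization, it too passes to a finite quotient and invokes \texttt{IsConjugate}). Your flag argument for why the symmetry group of the simplicial tessellation is exactly $G$ is also fine.

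There is, however, a genuine slip in your choice of $N$. You claim $K \subseteq H_i$ ``by construction'', hence $K^+ \subseteq H_i$, so that one may work in $G/K^+$. This is false: by construction $H_i$ is the kernel of a homomorphism $C_{17}K \to \ZZ_2$ that does \emph{not} annihilate $(abcd)^{15} \in K$, so $(abcd)^{15} \in K \setminus H_i$ and $K \not\subseteq H_i$. (This is precisely why $H_i$ is torsion-free while $K$ is not.) Consequently $K^+ \not\subseteq H_i$ either; in fact $K^+ \cap H_i = K_i^+$ has index $2$ in $K^+$, as the subgroup lattice in the paper shows. Your alternative $N = \bigcap_i K_i^+$ is not obviously normal in $G$, so that also needs justification. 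The clean fix---and what the paper actually does---is to let $N$ be the normal core of $H_1$ in $G$ (automatically of finite index, since $[G:H_1]$ is finite), check in GAP that $N$ is contained in the other $H_i$ of interest, and then run \texttt{IsConjugate} in $G/N$. With that correction your argument goes through.
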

\begin{prop} \label{prop:NiI-isom}
    Given a manifold $N_{i,I}^\pm$ with its natural tessellation by copies of $P$, we can uniquely recover:
    \begin{itemize}
        \item the Long manifold $Z_i$, up to tessellation-preserving isometry;
        \item the $\Lambda_i$-orbit of $I$;
        \item the type $\pm$.
    \end{itemize}
\end{prop}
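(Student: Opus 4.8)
The plan is to reconstruct the three pieces of data from the intrinsic combinatorics of the tessellation of $N\coloneqq N_{i,I}^\pm$, proceeding in order of increasing difficulty. The first step is to recover the $Q$-tessellation. The Coxeter diagram~\eqref{eq:P-labeling} of $P$ is a path whose edge-label sequence $(5,3,3,3,4,\infty)$ is not palindromic, so its automorphism group is trivial; hence the seven facet types $a,\dots,g$ of $P$, and all of their mutual incidences, can be read off from the local structure of any tessellation by copies of $P$. In particular the relation ``two $P$-tiles share a facet of type $a,b,c$ or $d$'' is intrinsic, and each of its equivalence classes is exactly a copy of the $120$-cell prism $Q=\langle a,b,c,d\rangle\cdot P$. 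This recovers the $Q$-tessellation of $N$ canonically, together with the partition of the boundary of each $Q$-tile into its order-$3$ base (a $120$-cell $D$ of dihedral angle $2\pi/3$), its order-$4$ base (a right-angled $120$-cell) and its $120$ side facets; both bases also inherit the standard subdivision of a $120$-cell into $14400$ copies of the $[5,3,3,3]$ orthoscheme.

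Next I would recover $Z_i$. The order-$3$ bases, glued in pairs, form a $4$-dimensional subcomplex $\mathcal X\subset N$ tessellated by copies of $D$; since $Y_i$ is smooth along its embedded $X_i$, the components of $\mathcal X$ are two-sided hypersurfaces, and tracing through $N=\widehat M_{i,I}^\pm/\langle\Psi\rangle$ identifies them. A non-identity element of order $17$ in $\mathrm{GL}(9,2)$ fixes exactly $2$ vectors of $\ZZ_2^9$: the number of fixed vectors is a power of $2$ congruent to $2^9$ modulo $17$, hence $2^1$ or $2^9$. Thus $\langle R\rangle$ has two fixed points and $30$ free orbits on $\ZZ_2^9$, so $\langle\Psi\rangle$ folds the $512$ copies $Y_i^{(v)}$ inside $\widehat M_{i,I}^\pm$ into $30$ embedded copies of $Y_i$ (from the free orbits) and $2$ copies of $Y_i/\langle\psi\rangle$ (from the fixed points). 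Accordingly $\mathcal X$ has $30$ components isometric to $X_i$, with $136$ copies of $D$ each, and $2$ components isometric to $X_i/\langle\psi\rangle=\HH^4/H_i=Z_i$, with $8$ copies of $D$ each. The two smallest components of $\mathcal X$ are therefore copies of $Z_i$ carrying its natural $[5,3,3,3]$-tessellation, and Proposition~\ref{prop:long-isom} converts this into the representative index in $\{1,2,3,4,5,7\}$.

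The third step, where the real work lies, is to recover the $\Lambda_i$-orbit of $I$ and the type. For each large component of $\mathcal X$, the $Q$-tiles resting on it form one of the $30$ embedded copies of $Y_i$, which carries its facet-adjacency graph $\mathcal G$ and, since the partition of $\Fac(Y_i)$ into the $16$ orbits of $\psi$ is $\langle\psi\rangle$-invariant, also that partition canonically --- despite the residual $\langle\psi\rangle$-ambiguity in the identification with $Y_i$; the two small components similarly yield the two copies $Y_i/\langle\psi\rangle$. I would package the gluings of $N$ along order-$4$ bases among these $32$ pieces, together with the $\psi$-orbit partitions, into a single decorated graph $\Gamma_N$, which is a tessellation-invariant of $N$. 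Unwinding the construction, $\Gamma_N$ is the $\langle\Psi\rangle$-quotient of the pair $(\lambda_{i,I}^{(k)},R)$, and the proposition reduces to showing that $\Gamma_N$ determines the coloring $\lambda_{i,I}^{(k)}$ up to an isometry of $Y_i$, a change of basis of $\ZZ_2^9$, a permutation of the $\psi$-orbits, and the replacement of $\psi^k$ by another power $\psi^h$; by Lemma~\ref{lemma:type-welldef} together with the established dependence of $\widehat M_{i,I}^{(k)}$ on only the type of $\psi^k$ and the $\Lambda_i$-orbit of $I$, that orbit of colorings is precisely the pair consisting of the $\Lambda_i$-orbit of $I$ and the type. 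Matching $\Gamma_N$ against the SageMath enumeration of good independent sets and their $\Lambda_i$-orbits then pins both down, with the sign $\pm$ read off from the left-equivalence class of the associated matrix $A^{(k)}$ (there being exactly two such classes, one per type).

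The main obstacle is this last step: certifying that the once-folded data surviving in $N$ still records the coloring up to precisely the symmetry group responsible for the $\Lambda_i$-orbit, and no coarser one --- equivalently, that $Y_i$, its $\psi$-orbit structure, its adjacency graph $\mathcal G$ and the colours of its facets are all faithfully visible in $N$, so that the only identification present is the expected $17$-fold folding by $\langle\Psi\rangle$. I expect this to be handled partly by the structural observations above (a tessellation-preserving isometry $N_{i,I}^\pm\to N_{i',I'}^{\pm'}$ must carry $Q$-tiles, facet types, components of $\mathcal X$, and hence $\Gamma_N$, to their primed counterparts) and partly by a computer check that the $\num{1600432}$ decorated graphs $\Gamma_N$ arising from the distinct admissible data are pairwise non-isomorphic.
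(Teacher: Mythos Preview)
Your approach coincides with the paper's: both locate the hypersurface built from $g$-type facets, split it into $2$ components isometric to $Z_i$ and $30$ isometric to $X_i$ (sitting inside $30$ embedded copies of $Y_i$), and then encode the adjacencies among these pieces in an auxiliary graph whose isomorphism type pins down the $\Lambda_i$-orbit of $I$ and the sign, the final step being deferred to a computer verification in both cases. Your fixed-point count for $R$ on $\ZZ_2^9$ (exactly $2$ fixed vectors, hence $30$ free orbits) is a clean justification of the $2{+}30$ decomposition that the paper's summary merely asserts.
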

\begin{prop}\label{prop:unique-tessellation}
    Every $N_{i,I}^\pm$ is uniquely tessellated by copies of $P$.
\end{prop}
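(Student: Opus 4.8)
The plan is to translate the statement into group theory and then cut the problem down, in stages, from copies of $P$ to a single canonical piece of $N:=N_{i,I}^\pm$.

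Realize $\pi_1(N)$ as a torsion-free lattice $\Lambda\le\mathrm{Isom}(\HH^5)$, so that $N=\HH^5/\Lambda$. A tessellation of $N$ by copies of $P$ is exactly a subgroup $\Gamma'\le\mathrm{Isom}(\HH^5)$ with $\Lambda\le\Gamma'$ acting as a reflection group whose fundamental chamber is a copy of $P$; by Mostow rigidity such a $\Gamma'$ is conjugate to $\Gamma$, and two tessellations are exchanged by an isometry of $N$ precisely when the corresponding $\Gamma'$ are conjugate under $N_{\mathrm{Isom}(\HH^5)}(\Lambda)$. The construction gives one such group $\Gamma'_0=\Gamma$, so the proposition is the assertion that every $\Gamma'$ of this kind is $N_{\mathrm{Isom}(\HH^5)}(\Lambda)$-conjugate to $\Gamma$. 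A preliminary remark clears away the ``symmetry'' part: the Coxeter diagram \eqref{eq:P-labeling} is an asymmetric path, hence has no nontrivial automorphisms, so the symmetry group of $P$ is trivial and $N_{\mathrm{Isom}(\HH^5)}(\Gamma)=\Gamma$; thus a tessellation and its reflection group determine one another, with no residual polytope symmetry to account for.

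The heart of the matter is a descent in dimension. Let $\mathcal F\subset N$ be the immersed totally geodesic hypersurface obtained by gluing up all the right-angled $120$-cell facets of the pieces $Y_i$ appearing in the construction. I would prove that $\mathcal F$ is canonically attached to the Riemannian manifold $N$, and that $N$ is then reconstructed from it: the closures of the components of $N\smallsetminus\mathcal F$ are the copies of $Y_i$, each of which one checks to be uniquely tessellated by copies of the $120$-cell prism $Q$, while $Q$ admits a unique Coxeter decomposition into copies of $P$. To see that $\mathcal F$ is canonical, the most hands-on route is to first pass to the $17$-fold cyclic covering $\widehat M_{i,I}^\pm\to N$, which — as a computation of $H_1(N;\ZZ)$ shows — is the \emph{unique} connected $17$-fold cyclic covering of $N$, hence intrinsic; on $\widehat M_{i,I}^\pm$ one then recovers the $\ZZ_2^9$ \emph{toric} action supplied by the coloring construction (its nontrivial involutions having fixed-point loci whose codimension-one parts are exactly the lifted facet-hypersurfaces of the $Y_i$), hence $\mathcal F$, which finally descends to $N$. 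Combining the resulting canonicity of the $P$-tessellation with Propositions~\ref{prop:long-isom} and~\ref{prop:NiI-isom} upgrades the tessellation-preserving classification to the isometry classification in the main theorem.

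I expect the canonicity of $\mathcal F$ to be the real obstacle: a priori $N$ (or its cover $\widehat M_{i,I}^\pm$) could carry extra totally geodesic hypersurfaces, extra finite-group actions, or further conjugates of $\Gamma$ containing $\Lambda$, none of which is excluded by soft arguments. Ruling these out is where the proof becomes genuinely computer-assisted — one needs the explicit description of $\mathrm{Isom}(N)$ and of the relevant finite portion of the commensurator of $\Lambda$ — whereas the remaining ingredients (triviality of $\mathrm{Sym}(P)$, uniqueness of the Coxeter decomposition of $Q$ into copies of $P$, and the bookkeeping that relates the $P$-, $Q$- and $\mathcal F$-pictures) should be routine though somewhat laborious.
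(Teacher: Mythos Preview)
Your group-theoretic reformulation is correct, and you have put your finger on the crux: everything hinges on showing that some structure—your hypersurface $\mathcal F$, or equivalently the $\ZZ_2^9$ toric action on the canonical $17$-fold cover—is intrinsic to $N$ rather than an artifact of the construction. The paper, however, takes a completely different route and never tries to recover $\mathcal F$ or any group action. It argues via the length spectrum: one classifies, up to $\Gamma$-conjugacy, the hyperbolic elements of a fixed small translation length (by searching the dual graph of the tessellation and applying a conjugacy algorithm for Coxeter groups), verifies that the associated closed geodesics can only sit in certain constrained positions relative to the $P$-tessellation, and then shows that the incidence pattern of these geodesics determines a full flag of hyperbolic subspaces of $\HH^5$, hence a copy of $P$, hence the tessellation. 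Since the set of closed geodesics of a given length is automatically an isometry invariant of $N$, nothing circular enters.

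Your plan instead reduces to computing $\mathrm{Isom}(\widehat M_{i,I}^\pm)$ and then singling out the $\ZZ_2^9$ subgroup. But $\Lambda$ is arithmetic, its commensurator is dense, and there is no soft bound on $[N_{\mathrm{Isom}(\HH^5)}(\Lambda):\Lambda]$; in practice a length-spectrum argument of exactly the paper's kind is the standard tool for pinning down such a normalizer, so your approach would most likely have to invoke the paper's method at this very step. Two further points you glossed over: the pieces of $N\smallsetminus\mathcal F$ are not all copies of $Y_i$—two of the $32$ are the quotients $Y_i/\langle\psi\rangle$ (compare the discussion around Proposition~\ref{prop:NiI-isom}); and your assertion that $Y_i$ is uniquely tessellated by copies of $Q$ is itself a rigidity statement of the same flavor (essentially that $X_i$ is uniquely tessellated by order-$3$ $120$-cells), which would still require its own proof.
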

The proofs of these three results can be found in the GitHub repository~\cite{git} as SageMath notebooks with accompanying explanations; even if they are mostly computational, it might still be interesting to summarize the main ideas.

In the case of Proposition~\ref{prop:long-isom}, we start by noticing that a tessellation-preserving isometry corresponds to conjugacy of the fundamental groups $\pi_1(Z_i)$ in the Coxeter group. Hence, proving the two isometries in the statement is a matter of finding an appropriate conjugating element in $G$; on the other hand, we find that the other Long manifolds $Z_i$ can be distinguished by their first homology groups (see~\cite[Table~1]{long}) or by counting quotients of $\pi_1(Z_i)$ isomorphic to the symmetric group $S_3$.

As for Proposition~\ref{prop:NiI-isom}, the idea is to consider the embedded submanifolds of $N_{i,I}^\pm$ built from facets corresponding to the generator $g$, of which there are $2$ isometric to $Z_i$ and $30$ isometric to $X_i$; the latter are found inside $30$ copies of $Y_i$. At this point, it remains to determine the type and the orbit of $I$: indeed, the adjacency relations between the copies of $Y_i$ allow us to define an auxiliary graph, and ultimately to extract the remaining information.

Finally, the proof of Proposition~\ref{prop:unique-tessellation} is by far the most technical of the three. The main idea is to classify hyperbolic elements of $\Gamma$ having a certain translation length, up to conjugacy. This is accomplished by searching the dual graph of the tessellation and finding conjugacy representatives via an algorithm based on~\cite{conjugacy-cox}. The associated geodesics can only occur in certain positions with respect to the tessellation. Eventually, by exploiting patterns in the arrangement of these geodesics, we recover a flag of hyperbolic subspaces of $\HH^5$, which determines the tessellation uniquely.

\section{Properties}\label{sec:properties}
In this section, we study various properties of the manifolds $N_{i,I}^\pm$ and related ones.
\subsection{Orientability}\label{sec:orientability}
\begin{prop}
    The manifolds $N_{i,I}^\pm$ are orientable.
\end{prop}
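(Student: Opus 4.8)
The plan is to trace orientability through the chain of constructions. First I would observe that $Y_i$ is orientable by the Theorem in Section~\ref{sec:4mfds} (indeed $Y_i$ has interior diffeomorphic to the determinant line bundle over $X_i$, which is an orientable total space even though $X_i$ is not). So fix an orientation on $Y_i$; I must check that the coloring construction, the passage to $\widehat M_{i,I}^{(k)}$, and the quotient by $\langle\Psi\rangle$ all preserve orientability. For the coloring method in general, a real toric manifold $M(\mathcal P,\lambda)$ obtained by gluing copies $\{\mathcal P_v\}$ is orientable whenever $\mathcal P$ is: one orients each copy $\mathcal P_v$ by the \emph{same} orientation if we must — more precisely, the reflection-type gluing maps (identity maps on facets $F$, between $\mathcal P_v$ and $\mathcal P_{v+\lambda(F)}$) are orientation-reversing as maps of the manifolds with boundary if both copies carry the same orientation, so instead one assigns to $\mathcal P_v$ the orientation twisted by some sign $\varepsilon(v)\in\{\pm1\}$ and needs a function $\varepsilon\colon V\to\{\pm1\}$ with $\varepsilon(v)\varepsilon(v+\lambda(F)) = -1$ for every facet $F$; such $\varepsilon$ exists iff there is a group homomorphism $V\to\ZZ_2$ sending every color $\lambda(F)$ to $1\in\ZZ_2$. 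Hence $M(\mathcal P,\lambda)$ is orientable exactly when the colors $\{\lambda(F)\}$ all lie in a common affine hyperplane through the origin's complement — equivalently, when there is a linear functional $V\to\ZZ_2$ equal to $1$ on every $\lambda(F)$.

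So the key computation is: do the columns of the characteristic matrix of $\widehat M_{i,I}^{(k)}$ all have a common linear functional evaluating to $1$? The colors used are the $17$ columns $A_0,\dots,A_{16}$ of $A$ (each facet in $\psi^j(I)$ gets color $A_{k^{-1}j}$, so the set of colors that actually occur is all of $\{A_0,\dots,A_{16}\}$). Each column $A_j$ of $A$ has exactly $6$ ones — inspecting the displayed matrix $A$, every column is a cyclic shift of the weight-$6$ pattern, so $\langle \mathbf 1, A_j\rangle = 6 = 0$ in $\ZZ_2$. That is the \emph{wrong} parity, so the all-ones functional does not work directly. Instead I would look for some fixed $w\in\ZZ_2^9$ with $\langle w, A_j\rangle = 1$ for all $j$; since $C(B) = \ker A$ has minimum distance $6$ and the $A_j$ span $\ZZ_2^9$, the existence of such $w$ is equivalent to $\mathbf 1_{17}\notin C(B)^\perp{}^\perp$-type condition — concretely, such $w$ exists iff the vector $\mathbf 1_{17}\in\ZZ_2^{17}$ lies in the row space $C(A)$, because $\langle w,A_j\rangle = (wA)_j$ and we want $wA = \mathbf 1_{17}$. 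Now $\mathbf 1_{17}\in C(A)$ because $C(A)$ is the quadratic residue code of length $17$, which (being the extended-by-one-coordinate / cyclic QR code with $2$ a quadratic residue mod $17$) contains the all-ones word — this is a standard fact about QR codes of prime length $\equiv \pm1 \pmod 8$, and in any case can be verified on the explicit matrix $A$ in SageMath. This gives the required functional, hence each $\widehat M_{i,I}^{(k)}$ is orientable; the same argument, restricted to the $9$-dimensional codomain, shows $M_{i,I}$ (with codomain $\ZZ_2^{17}$ and colors the standard basis vectors $e_j$) is orientable via $w = \mathbf 1_{17}$, since $\langle \mathbf 1_{17}, e_j\rangle = 1$.

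Finally I would handle the quotient $N_{i,I}^\pm = \widehat M_{i,I}^\pm/\langle\Psi\rangle$. Since $\langle\Psi\rangle$ has order $17$, it suffices that $\Psi$ is orientation-preserving — any odd-order homeomorphism of an orientable manifold is automatically orientation-preserving, as the orientation character $\langle\Psi\rangle\to\{\pm1\}$ must be trivial on a group of odd order. Therefore $N_{i,I}^\pm$ is orientable. I'd close by noting the only genuinely non-formal input is the parity/functional computation for $A$, i.e.\ that $\mathbf 1_{17}\in C(A)$; everything else (orientability of $Y_i$, orientability being inherited by toric gluings admitting the functional, and the odd-order quotient) is standard. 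The main obstacle, then, is pinning down that one linear-algebra fact about the quadratic residue code — which is why it is cleanest to either cite the standard QR-code property or verify $wA = \mathbf 1_{17}$ directly with the explicit $w$ one reads off (or has SageMath produce) from the displayed matrix.
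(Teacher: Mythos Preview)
Your argument is correct and follows essentially the same route as the paper: reduce to $\widehat M_{i,I}^\pm$ using that $|\langle\Psi\rangle|=17$ is odd, then show orientability of the real toric manifold by producing a linear functional $w\in\ZZ_2^9$ with $wA=\mathbf 1_{17}$. The paper cites \cite{coloring-orient} for the orientability criterion (which you derive directly) and simply exhibits $w=(1,0,0,1,1,1,0,0,1)$ rather than invoking the QR-code fact that $\mathbf 1_{17}\in C(A)$; your appeal to the standard property of quadratic residue codes is a legitimate alternative. One small slip: your remark that ``each column $A_j$ of $A$ has exactly $6$ ones'' is false (the columns have varying weights, e.g.\ the last column has weight $1$); it is the \emph{rows} that are cyclic shifts, and they have weight $7$, not $6$---but this throwaway observation plays no role in the actual argument.
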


\begin{proof}
It is enough to prove orientability for $\widehat{M}_{i,I}^\pm$, since $\langle \Psi \rangle$ has odd order and thus preserves orientation. Recall that $\widehat{M}_{i,I}^\pm$ can be decomposed into $2^9$ copies of $Y_i$, which is orientable; hence, we just need to orient each copy consistently. Following the proof of~\cite[Lemma~2.4]{coloring-orient}, it suffices to find an isomorphism of $\ZZ_2^9$ that sends all facet colors (columns of $A$) to vectors with an odd number of ones.

If $u_n$ is the length-$n$ row vector of ones, this is equivalent to the existence of $M \in \mathrm{GL}(9,2)$ such that $u_9MA = u_{17}$. We can easily verify that, by taking
\begin{equation}
    w \coloneqq (1, 0, 0, 1, 1, 1, 0, 0, 1),
\end{equation}
we have $wA = u_{17}$. Hence, any $M$ such that $u_9 M = w$ works.
\end{proof}
\begin{remark}
The relative orientation of the copy of $Y_i$ indexed by $v \in \ZZ_2^9$ is determined by the value of $w\cdot v\in \ZZ_2$.
\end{remark}
\subsection{Volume}
In general, computing the volume of an odd-dimensional hyperbolic manifold is harder than in the even-dimensional case, as the Chern--Gauss--Bonnet formula is not available. However, there are number-theoretical techniques that apply to arithmetic orbifolds in all dimensions, based on \emph{Prasad's formula}~\cite{prasad}. In fact, an example of this is an expression for the volume of the simplicial prism $P$, essentially found in~\cite[Proposition~4]{vol-p}:
\begin{equation}
    \operatorname{vol}(P) =
    \frac{9\sqrt{5}^{15}}{32\pi^{15}}\zeta_{k_0}(2)\zeta_{k_0}(4)\zeta_{\ell_2}(3)/\zeta_{k_0}(3),
\end{equation}
where $k_0 = \QQ(\sqrt{5})$, $\ell_2 = \QQ(\sqrt{\varphi})$, $\varphi = \frac{\sqrt{5}+1}{2}$, and $\zeta_k$ is the Dirichlet zeta function for the field $k$. We can numerically evaluate this expression using PARI/GP~\cite{PARI2}, obtaining:
\begin{equation}
\begin{array}{@{} l @{} >{{}}l<{{}} @{} S[table-format=6.18] @{} l @{}}
    \operatorname{vol}(P) &=&          
         0.001984696430311649
    &{}\dots \\
    \operatorname{vol}(N_{i,I}^\pm) &=& 234124.317462427649199813
    &{}\dots
\end{array}
\end{equation}

\subsection{First homology}
As expected in high dimension, the manifolds we constructed are quite combinatorially complex, so that a direct computation of their homology, e.g.\ from a CW complex structure, seems elusive. However, some simple topological operations can reduce them to well-studied objects known as \emph{real toric spaces}, on which we can apply a formula of Suciu and Trevisan, generalized by Choi and Park~\cite{choi-park}.

\begin{defin}
    Given a simplicial complex $K$ on $m$ vertices, the \emph{real moment-angle complex} $\RR\mathcal Z_K$ of $K$ is defined as
    \begin{equation}
        \RR\mathcal Z_K \coloneqq \bigcup_{\sigma \in K}
        \{
            (x_1,\dots,x_m) \in [-1,1]^m \mid
            \text{$x_i \in \{\pm 1\}$ if $i \not \in \sigma$} 
        \}.
    \end{equation}
\end{defin}
These spaces are cubical subcomplexes of $[-1,1]^m$ and have a natural $\ZZ_2^m$-action by reflections along the coordinate axes.
\begin{defin}
    If $n < m$ and $\Lambda$ is a binary $n \times m$ matrix, then $\ker \Lambda \subseteq \ZZ_2^m$ acts on $\RR\mathcal Z_K$, and we define the \emph{real toric space} $M^\RR(K, \Lambda)$ as the quotient $\RR\mathcal Z_K / \ker \Lambda$.
\end{defin}

Now consider a manifold $M \coloneqq \widehat M^\pm_{i,I}$. It contains $2^9$ embedded copies of $X_i$, whose first homology $H_1(X_i) \simeq K_i^\mathrm{ab}$ is finite and has only $2$-torsion (see Table~\ref{tab:Xi-ab}).

\begin{table}[ht]
\centering
    \begin{tabular}{cc}
         \toprule Manifolds & First homology \\
         \midrule
         $X_1 \simeq X_8, X_4, X_5$
            & $\ZZ_2^{32} \oplus \ZZ_4^{11}$\\[1ex]
         $X_2, X_3 \simeq X_6$
            & $\ZZ_2^{27} \oplus \ZZ_4^{18}$ \\[1ex]
         $X_7$
            & $\ZZ_2^{27} \oplus \ZZ_4^{17} \oplus \ZZ_8$ \\
         \bottomrule
    \end{tabular}
    \caption{First homology groups of the manifolds $X_i, i = 1,\dots,8$.}
    \label{tab:Xi-ab}
\end{table}

If $F$ is a field with $\operatorname{char} F \ne 2$, then the homology exact sequence
\begin{equation}\label{eq:exact-collapse}
\begin{tikzcd}
0 = H_1(X_i; F) \arrow[r] & H_1(M; F) \arrow[r,"\sim"] & {H_1(M, X_i; F)} \arrow[r] & \widetilde H_0(X_i; F) = 0,
\end{tikzcd}
\end{equation}
implies $H_1(M; F) \simeq H_1(M, X_i; F) \simeq H_1(M/X_i; F)$. By continuing in this manner, we can collapse all copies of $X_i$ to points, while preserving $H_1$. The space $\overline M$ thus obtained can also be described as a gluing of $2^9$ copies of a cone over $\partial Y_i$.

Consider $\mathcal G$ as a $1$-dimensional simplicial complex; let $\lambda \coloneqq \lambda_{i,I}^{(k)}$ be the coloring used in the construction of $M$ and let $\Lambda$ be a $9\times 272$ characteristic matrix for $\lambda$. This enables us to define a real toric space $M_\mathcal G \coloneqq M^\RR(\mathcal G, \Lambda)$.

As one could expect, this has a connection with the real toric \emph{manifold} $M$, underlined by the following result.
\begin{prop}
We have $H_1(\overline M) \simeq H_1(M_\mathcal G)$ in homology with $F$-coefficients.
\end{prop}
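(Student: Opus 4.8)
The plan is to use that $\overline M$ and $M_{\mathcal G}$ are both assembled from $2^9$ \emph{contractible} chambers glued by the \emph{same} coloring $\Lambda$, and to isolate $H_1$ through the character decomposition of $F$-homology of such gluings. Write $P\coloneqq\operatorname{cone}(\partial Y_i)$ for the chamber of $\overline M$, with panels the facets $F_1,\dots,F_{272}$ of $Y_i$ sitting in the base $\partial Y_i$ of the cone; then $\overline M$ is obtained by gluing copies $\{P_v\}_{v\in\ZZ_2^9}$, identifying $P_v$ with $P_{v+\Lambda_j}$ along $F_j$. On the other side, $\RR\mathcal Z_{\mathcal G}$ is glued from $2^{272}$ copies of a contractible chamber $C\subseteq[0,1]^{272}$ (star-shaped towards $(1,\dots,1)$) with panels $C_j=\{x_j=0\}\cap C$, so that $M_{\mathcal G}=\RR\mathcal Z_{\mathcal G}/\ker\Lambda$ is glued from $2^9$ copies of $C$ by the identical rule, identifying the $v$-th and $(v+\Lambda_j)$-th copies along $C_j$. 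Since $\operatorname{char}F\neq2$, the residual $\ZZ_2^9$-action splits $H_*(-;F)$ over the characters $\omega\in\Hom(\ZZ_2^9,F^\times)$, and the standard Hochster-type formula for reflection-type gluings (see \cite{choi-park} and \cite{davis-januszkiewicz}) identifies the $\omega$-isotypic summand of $H_k$ with $H_k$ of the chamber relative to the union of the panels on which $\omega$ is nontrivial:
\[
    H_1(\overline M;F)=\bigoplus_{\omega}H_1\!\Bigl(P,\ \textstyle\bigcup_{j\in S_\omega}F_j;\,F\Bigr),
    \qquad
    H_1(M_{\mathcal G};F)=\bigoplus_{\omega}H_1\!\Bigl(C,\ \textstyle\bigcup_{j\in S_\omega}C_j;\,F\Bigr),
\]
where $S_\omega=\{\,j:\omega(\Lambda_j)\neq1\,\}\subseteq\{1,\dots,272\}$ is the \emph{same} index set in both formulas, because the two gluings use the same matrix $\Lambda$.

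Next I would evaluate each summand. The chambers $P$ and $C$ are contractible, so the long exact sequence of a pair gives, for every $S\subseteq\{1,\dots,272\}$,
\[
    H_1\!\Bigl(P,\ \textstyle\bigcup_{j\in S}F_j;F\Bigr)\cong\widetilde H_0\!\Bigl(\textstyle\bigcup_{j\in S}F_j;F\Bigr),
    \qquad
    H_1\!\Bigl(C,\ \textstyle\bigcup_{j\in S}C_j;F\Bigr)\cong\widetilde H_0\!\Bigl(\textstyle\bigcup_{j\in S}C_j;F\Bigr).
\]
Every $F_j$ is a $120$-cell and every nonempty intersection of the $F_j$ is a face of $Y_i$, hence a contractible polytope; likewise every nonempty intersection of the panels $C_j$ is a contractible face of $C$. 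By the nerve lemma, $\bigcup_{j\in S}F_j$ is homotopy equivalent to the full subcomplex $\mathcal N_S$ on $S$ of the nerve $\mathcal N$ of the cover $\{F_j\}$ of $\partial Y_i$, while $\bigcup_{j\in S}C_j$ is homotopy equivalent to the full subcomplex on $S$ of the nerve of $\{C_j\}$, which is precisely $\mathcal G_S$ (two panels $C_j,C_k$ meet iff $\{j,k\}\in E(\mathcal G)$, and no three of them meet, as $\mathcal G$ is $1$-dimensional). By the very definition of the adjacency graph, $\mathcal G$ is the $1$-skeleton of $\mathcal N$; hence $\mathcal N_S$ and $\mathcal G_S$ share a $1$-skeleton, have the same connected components, and $\widetilde H_0(\mathcal N_S;F)\cong\widetilde H_0(\mathcal G_S;F)$ for all $S$.

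Combining the two steps, both $H_1(\overline M;F)$ and $H_1(M_{\mathcal G};F)$ are naturally isomorphic to $\bigoplus_{\omega}\widetilde H_0(\mathcal G_{S_\omega};F)$, which proves the proposition and exhibits the isomorphism as $\ZZ_2^9$-equivariant; the trivial character contributes $H_1(P)=0=H_1(C)$ and may be discarded at the start.

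The point requiring the most care is the character-decomposition formula itself. One must first confirm that $\overline M$ genuinely carries the structure of a reflection-type gluing of $2^9$ copies of $\operatorname{cone}(\partial Y_i)$ along the panels $\{F_j\}$ — this follows from the construction of $\widehat M_{i,I}^{\pm}$, since the collapsed copies of $X_i$ lie in the interiors of the copies of $Y_i$, away from the gluing facets, together with the description of $\overline M$ as a gluing of cones over $\partial Y_i$ already established above — and then invoke the decomposition, whose proof is a routine Mayer–Vietoris argument over the $2^9$ chambers, using $\operatorname{char}F\neq2$ so that $F[\ZZ_2^9]$ is semisimple. Everything else is formal.
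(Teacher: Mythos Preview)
Your approach is sound in outline but takes a genuinely different route from the paper. The paper argues geometrically: $\overline M$ carries a cubical tessellation by $5$-cubes (obtained from truncated $5$-cubes around the vertices of the $Y_i$ after collapsing the $X_i$), and its $2$-skeleton matches that of $M_{\mathcal G}$ up to a duplication of each square, which can be removed by excision without affecting $H_1$. You instead split $H_1$ of both spaces over the characters of $\ZZ_2^9$ via the reflection-type decomposition, reduce each summand to a $\widetilde H_0$ by contractibility of the chamber, and compare. Your route is more algebraic and potentially more general (the character formula works in every degree), at the cost of invoking the decomposition as a black box; the paper's route is elementary and self-contained but specific to $H_1$.

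One premise needs correction, and it is precisely the phenomenon the paper's proof handles by excision: two adjacent facets $F_j, F_k$ of $Y_i$ share \emph{two} opposite dodecahedra (coming from the symmetry $(abcd)^{15}$), so $F_j \cap F_k$ is disconnected and the nerve lemma does not apply to the cover $\{F_j\}$. This does not actually damage your argument, since you only use the nerve step to conclude $\widetilde H_0\bigl(\bigcup_{j\in S} F_j\bigr) \cong \widetilde H_0(\mathcal G_S)$, and that follows directly from the fact that each $F_j$ is connected and $F_j \cap F_k \neq \emptyset$ iff $\{j,k\} \in E(\mathcal G)$. Replace the nerve lemma invocation with this one-line observation and the proof goes through.
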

\begin{proof} 
Note that $M$ is naturally tessellated by truncated $5$-cubes centered at the vertices of each $Y_i$, whose $4$-simplex facets lie on the $X_i$. Collapsing the latter creates a tessellation of $\overline M$ by $5$-cubes whose vertices are among the collapsed points.

We are only interested in the $2$-skeleton of this tessellation, which is composed of points, edges and squares. 
The points are in bijection with the copies of $Y_i$ in $M$, that is, with $\ZZ_2^9$. Edges correspond to facets of the $Y_i$ in $M$ (i.e.\ vertices of $\mathcal G$), while squares correspond to pairs of adjacent facets (edges of $\mathcal G$).

The construction perfectly mirrors that of $M_\mathcal G$, save for the fact that any two adjacent facets share two opposite dodecahedra (due to the symmetry $(abcd)^{15}$) and contribute to the $2$-skeleton with two squares having the same boundary. However, by excision, we may remove one of the two squares without changing the first homology.
\end{proof}
Now we shall apply the aforementioned formula, which we report here:
\begin{thm}[{\cite[Theorem~1.1]{choi-park}}]\label{thm:choi-park}
    Let $M = M^\RR(K, \Lambda)$ be a real toric space
and $R$ a commutative ring in which $2$ is a unit. Then there is an $R$-linear
isomorphism
\begin{equation}
    H^p(M; R) \simeq \bigoplus_{\omega \in \operatorname{row} \Lambda} \widetilde H^{p-1}(K_\omega; R),
\end{equation}
where $\operatorname{row} \Lambda$ is the row space of $\Lambda$, and $K_\omega \subseteq K$ is the subcomplex of $K$ induced by the support of $\omega$ as a subset of $\{1,\dots,m\}$.
\end{thm}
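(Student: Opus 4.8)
The plan is to lift the problem to the real moment-angle complex $\RR\mathcal Z_K$, exploit the known combinatorial description of $H^*(\RR\mathcal Z_K; R)$, and then descend to the quotient $M = \RR\mathcal Z_K / \ker\Lambda$ using that $|\ker\Lambda|$ is a power of $2$ and hence invertible in $R$. The first ingredient is the decomposition
\[
    H^*(\RR\mathcal Z_K; R) \;\simeq\; \bigoplus_{\omega \subseteq \{1,\dots,m\}} \widetilde H^{\,*-1}(K_\omega; R),
\]
where $\omega$ ranges over \emph{all} subsets of the vertex set and $K_\omega$ is the induced subcomplex (with the convention $\widetilde H^{-1}(\emptyset) = R$). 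This can be obtained by an iterated Mayer--Vietoris argument on the cubical structure of $\RR\mathcal Z_K \subseteq [-1,1]^m$, or quoted from the polyhedral-product literature. Throughout, I identify a subset $\omega$ with its indicator vector in $\ZZ_2^m$.

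The key point --- and the step I expect to be the main obstacle --- is to track the natural $\ZZ_2^m$-action (by coordinate reflections of $[-1,1]^m$) through this decomposition. I would show that an element $g \in \ZZ_2^m$ acts on the summand indexed by $\omega$ by multiplication by $(-1)^{\langle g,\omega\rangle} \in R^\times$, where $\langle\,\cdot\,,\cdot\,\rangle$ is the standard $\ZZ_2$-bilinear form. The idea: the $i$-th reflection negates exactly those cochains that involve the $i$-th interval direction, and in the cellular/Mayer--Vietoris model these are precisely the contributions of subsets $\omega \ni i$; one then propagates the signs through the Künneth identifications and checks that they are globally coherent. This is bookkeeping rather than anything deep, but it requires pinning down the decomposition isomorphism explicitly enough --- ideally at the cochain level --- to read off the characters.

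Granting the character formula, I would invoke the transfer homomorphism: since $2 \in R^\times$ and $\ker\Lambda$ acts cellularly on $\RR\mathcal Z_K$, the projection induces $H^*(M; R) \simeq H^*(\RR\mathcal Z_K; R)^{\ker\Lambda}$. By the character computation, the summand $\widetilde H^{\,*-1}(K_\omega; R)$ lies in the invariants exactly when $\langle g,\omega\rangle = 0$ for all $g \in \ker\Lambda$, i.e. when $\omega \in (\ker\Lambda)^{\perp}$. Since $\ker\Lambda$ and $\operatorname{row}\Lambda$ are orthogonal complements of each other in $\ZZ_2^m$ (standard linear algebra over a field), this is precisely the condition $\omega \in \operatorname{row}\Lambda$, giving $H^p(M; R) \simeq \bigoplus_{\omega \in \operatorname{row}\Lambda} \widetilde H^{\,p-1}(K_\omega; R)$. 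All maps in sight are $R$-linear, so this finishes the proof; a little extra care shows the isomorphism is natural in $K$ and $\Lambda$ as well.
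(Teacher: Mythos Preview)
The paper does not prove this theorem: it is quoted from \cite[Theorem~1.1]{choi-park} as a black box and applied immediately to obtain the formula~\eqref{eq:sum-cc} for $b_1(M_{\mathcal G})$. There is therefore no proof in the paper against which to compare your proposal.

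For what it is worth, your outline is a standard and correct route to the result: the Hochster-type splitting of $H^*(\RR\mathcal Z_K;R)$ over subsets $\omega\subseteq\{1,\dots,m\}$, the identification of the $\ZZ_2^m$-action on each summand via the character $g\mapsto(-1)^{\langle g,\omega\rangle}$, and the transfer isomorphism $H^*(M;R)\simeq H^*(\RR\mathcal Z_K;R)^{\ker\Lambda}$ (valid since $2\in R^\times$), together with $(\ker\Lambda)^\perp=\operatorname{row}\Lambda$. The only step you flag as delicate --- making the sign action explicit at the cochain level --- is indeed the place where one has to fix a specific model for the splitting, but once done it is routine.
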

A direct consequence is
\begin{equation}\label{eq:sum-cc}
    b_1(M_\mathcal G; F) = \sum_{\omega \in (\operatorname{row} \Lambda) \setminus \{0\}} (b_0(\mathcal G_\omega; F) - 1).
\end{equation}
Heuristically, we expect the graphs $\mathcal G_\omega$ to be connected: since $5$ is the minimum distance of the code $C(A)$, every nonzero $\omega \in \operatorname{row} \Lambda$ has at least $80 = 5 \cdot 16$ ones. Using a custom program (written in C++ for performance reasons) we verify that this is the case for all choices of $i$, $I$ and a type $\pm$. It follows that the sum~(\ref{eq:sum-cc}) is always zero:
\begin{thm}
    Let $F$ be a field with $\operatorname{char} F \ne 2$. Then all manifolds $\widehat M_{i,I}^\pm$ have vanishing first homology with coefficients in $F$.
\end{thm}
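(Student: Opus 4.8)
The plan is to chain together the reductions set up above until the statement becomes a purely combinatorial fact about the adjacency graph $\mathcal G$, and then to verify that fact. Fix $i$, a good independent set $I$, and a type, and abbreviate $\widehat M \coloneqq \widehat M_{i,I}^\pm$. Recall that $\widehat M$ contains $2^9$ pairwise disjoint embedded copies of $X_i$, one in each copy of $Y_i$. By Table~\ref{tab:Xi-ab}, $H_1(X_i;\ZZ)$ is finite with only $2$-torsion, so since $\operatorname{char} F \ne 2$ we have $H_1(X_i;F)=0$, while $\widetilde H_0(X_i;F)=0$ because $X_i$ is connected. Feeding these into the long exact sequence~(\ref{eq:exact-collapse}) of the pair $(\widehat M, X_i)$, together with the good-pair identification $H_1(\widehat M, X_i;F)\simeq H_1(\widehat M/X_i;F)$, lets us collapse the copies of $X_i$ one at a time — legitimate since they are disjoint, so collapsing one leaves the others and their homology untouched — and yields $H_1(\widehat M;F)\simeq H_1(\overline M;F)$ for the quotient space $\overline M$. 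Combined with the isomorphism $H_1(\overline M;F)\simeq H_1(M_\mathcal G;F)$ established above, the theorem is reduced to proving $b_1(M_\mathcal G;F)=0$ for every $i$, $I$, and type.

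Here I would apply the Choi--Park formula (Theorem~\ref{thm:choi-park}) with $p=1$ and coefficients in $F$; taking $F$-dimensions and using $\dim_F H^1 = \dim_F H_1 = b_1$ over a field, this is exactly formula~(\ref{eq:sum-cc}), namely $b_1(M_\mathcal G;F) = \sum_{\omega}(b_0(\mathcal G_\omega;F)-1)$ over nonzero $\omega\in\operatorname{row}\Lambda$. As each summand is nonnegative, it suffices to show: \emph{for every $i$, $I$, type, and every nonzero $\omega\in\operatorname{row}\Lambda$, the induced subgraph $\mathcal G_\omega$ is connected.} The structural input is that the coloring $\lambda_{i,I}^{(k)}$ is constant along $\psi$-orbits of facets: all $16$ facets in $\psi^j(I)$ receive the same color $A^{(k)}_j$, so a row vector $\omega = u\Lambda$ with $u\in\ZZ_2^9$ is constant on each orbit $\psi^j(I)$, with value the $j$-th entry of $uA^{(k)}$. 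Since $A^{(k)}$ is $A$ with its columns permuted, $uA^{(k)}$ lies in a code isometric to $C(A)$, the $[17,9]$ quadratic residue code, whose minimum distance is $5$; hence a nonzero $\omega$ is supported on at least $5$ full $\psi$-orbits, that is, on at least $80$ of the $272$ vertices of $\mathcal G$.

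This weight bound makes connectedness of $\mathcal G_\omega$ very plausible but does not establish it. The remaining step is a finite verification: enumerate $i\in\{1,\dots,8\}$, the $\Lambda_i$-orbits $I$, both types, and the nonzero vectors $\omega\in\operatorname{row}\Lambda$ (one for each nonzero $u\in\ZZ_2^9$), and check by breadth-first search that every $\mathcal G_\omega$ has a single connected component; this is what the custom C++ program carries out. Granting it, each term of~(\ref{eq:sum-cc}) vanishes, so $b_1(M_\mathcal G;F)=0$, and therefore $H_1(\widehat M_{i,I}^\pm;F)=0$ for all choices of $i$, $I$, and type, which is the assertion.

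The main obstacle is precisely this connectedness check: I do not see a purely structural replacement — say an expansion-type argument for $\mathcal G$ combined with the weight bound — that would force all of these induced subgraphs to be connected, so one is essentially driven to the large but entirely feasible enumeration. The remaining points need only routine care: checking that the successive collapses of the disjoint copies of $X_i$ may be performed independently, and noting that the coordinate permutation implicit in $A^{(k)}$ cannot alter any minimum weights since it is a code isometry.
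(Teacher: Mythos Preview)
Your proposal is correct and follows essentially the same route as the paper: collapse the $2^9$ copies of $X_i$ using the exact sequence~(\ref{eq:exact-collapse}), pass to the real toric space $M_\mathcal G$ via the proposition, apply Choi--Park to obtain~(\ref{eq:sum-cc}), observe the weight-$\ge 80$ heuristic from the minimum distance of $C(A)$, and then verify connectedness of every $\mathcal G_\omega$ by computer. The only cosmetic difference is that you spell out a bit more explicitly why $\omega$ is constant on $\psi$-orbits and why the code isometry $A\mapsto A^{(k)}$ preserves weights; the paper leaves these implicit.
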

\begin{cor}\label{cor:hom-m-hat}
    The integral homology group $H_1(\widehat M_{i,I}^\pm)$ is a finite abelian $2$-group.
\end{cor}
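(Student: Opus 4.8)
The plan is to feed the preceding theorem into the universal coefficient theorem. Since $\widehat M_{i,I}^\pm$ is a closed manifold it has the homotopy type of a finite CW complex, so $H_1(\widehat M_{i,I}^\pm)$ is a finitely generated abelian group; write it as $\ZZ^r \oplus T$ with $T$ its (finite) torsion subgroup. The goal is to show $r = 0$ and that $|T|$ is a power of $2$.

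First I would take $F = \QQ$ in the theorem. By the universal coefficient theorem, $H_1(\widehat M_{i,I}^\pm;\QQ) \cong \bigl(H_1(\widehat M_{i,I}^\pm)\otimes\QQ\bigr) \oplus \operatorname{Tor}\bigl(H_0(\widehat M_{i,I}^\pm),\QQ\bigr)$, and the $\operatorname{Tor}$ term vanishes because $H_0$ is free abelian; hence $\QQ^r \cong H_1(\widehat M_{i,I}^\pm;\QQ) = 0$, forcing $r = 0$. So $H_1(\widehat M_{i,I}^\pm) = T$ is finite.

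Next, for every odd prime $p$, I would apply the theorem with $F = \FF_p$ (which has characteristic $\ne 2$). Again by universal coefficients, and using once more that $H_0$ is free so that the relevant $\operatorname{Tor}$ summand vanishes, $H_1(\widehat M_{i,I}^\pm;\FF_p) \cong T \otimes \FF_p \cong T/pT$. The theorem gives $H_1(\widehat M_{i,I}^\pm;\FF_p) = 0$, so $T/pT = 0$, which for a finite abelian group means $p \nmid |T|$. As this holds for every odd prime $p$, the order of $T$ is a power of $2$; that is, $T$ is a finite abelian $2$-group, as claimed.

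There is no serious obstacle here: the statement is a formal consequence of the previous theorem via standard homological algebra. The only points requiring a word of care are the vanishing of the $\operatorname{Tor}$ summands in the universal coefficient sequence (which follows from $H_0$ being free abelian) and the finite generation of $H_1$ (which follows from compactness of the manifold), both of which are routine.
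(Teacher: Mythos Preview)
Your argument is correct and is exactly the standard universal-coefficients deduction that the paper leaves implicit: the corollary is stated without proof immediately after the theorem on vanishing of $H_1$ with coefficients in any field of characteristic $\ne 2$, and your use of $F=\QQ$ to kill the free rank and $F=\FF_p$ for odd $p$ to kill the odd torsion is precisely the intended reasoning.
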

Recall that the manifold $\widehat M_{i,I}^\pm$ is a $17$-fold regular covering space of $N_{i,I}^\pm$. Using this fact, we can prove:
\begin{thm}
    The integral homology group $H_1(N_{i,I}^\pm)$ is of the form $T_2 \oplus \ZZ_{17}$, where $T_2$ is a finite abelian $2$-group obtained as a quotient of $H_1(\widehat M_{i,I}^\pm)$.
\end{thm}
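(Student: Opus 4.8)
The plan is to use the fact, recorded just before the statement, that $\widehat M \coloneqq \widehat M_{i,I}^\pm$ is a connected regular $17$-fold covering of $N \coloneqq N_{i,I}^\pm$, with cyclic deck group $\langle \Psi \rangle \simeq \ZZ_{17}$ acting freely, together with Corollary~\ref{cor:hom-m-hat}, which says that $H_1(\widehat M)$ is a finite abelian $2$-group. Set $\Gamma \coloneqq \pi_1(N)$ and let $N' \coloneqq \pi_1(\widehat M) \trianglelefteq \Gamma$ be the corresponding index-$17$ normal subgroup, so that there is a short exact sequence $1 \to N' \to \Gamma \to \ZZ_{17} \to 1$. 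I would first observe that $H_1(N)$ is finite: the transfer homomorphism $\tau\colon H_1(N;\QQ) \to H_1(\widehat M;\QQ)$ satisfies $\pi_* \circ \tau = 17 \cdot \mathrm{id}$, so $\pi_*$ is surjective and $b_1(N) \le b_1(\widehat M) = 0$. Hence $H_1(N)$ decomposes canonically as the direct sum of its $p$-primary components, and it suffices to identify them.

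Next I would feed the group extension above into the five-term exact sequence of low-degree terms,
\[
H_2(\ZZ_{17}) \longrightarrow \bigl(H_1(\widehat M)\bigr)_{\ZZ_{17}} \longrightarrow H_1(N) \longrightarrow H_1(\ZZ_{17}) \longrightarrow 0,
\]
where $\bigl(H_1(\widehat M)\bigr)_{\ZZ_{17}} = N'/[\Gamma, N']$ denotes the coinvariants of the deck action. Since $H_2(\ZZ_{17}) = 0$ and $H_1(\ZZ_{17}) = \ZZ_{17}$, this becomes
\[
0 \longrightarrow T_2 \longrightarrow H_1(N) \longrightarrow \ZZ_{17} \longrightarrow 0, \qquad T_2 \coloneqq \bigl(H_1(\widehat M)\bigr)_{\ZZ_{17}}.
\]
As a quotient of $H_1(\widehat M)$, the group $T_2$ is a finite abelian $2$-group, which is precisely the description in the statement. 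Finally, this extension splits, because $\operatorname{Ext}^1_{\ZZ}(\ZZ_{17}, T_2) \simeq T_2/17\,T_2 = 0$, multiplication by $17$ being an automorphism of the $2$-group $T_2$; therefore $H_1(N) \simeq T_2 \oplus \ZZ_{17}$.

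I do not anticipate a genuine obstacle here: the substantive input is Corollary~\ref{cor:hom-m-hat}, and what remains is the bookkeeping above. The only points calling for a little care are the identification of the kernel term in the five-term sequence with the coinvariants $\bigl(H_1(\widehat M)\bigr)_{\ZZ_{17}}$, hence with a quotient of $H_1(\widehat M) = N'/[N',N']$, and the harmless splitting argument. If one prefers to avoid the spectral-sequence language altogether, one can instead argue $p$-primary-part by $p$-primary-part directly from the transfer: for $p \ne 17$ the map $\pi_*$ is onto after localizing at $p$, so $H_1(N)_{(p)}$ is a quotient of $H_1(\widehat M)_{(p)}$, which vanishes unless $p = 2$ and yields $T_2$ at $p = 2$; while the $17$-primary part is forced to be $\ZZ_{17}$ by the surjection $H_1(N) \to \ZZ_{17}$ from the covering together with the vanishing of the $17$-primary part of the $2$-group $T_2$.
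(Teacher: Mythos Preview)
Your proof is correct. It reaches the same short exact sequence $0 \to T_2 \to H_1(N) \to \ZZ_{17} \to 0$ as the paper, with the same $T_2$ (your coinvariants $\widehat\pi/[\pi,\widehat\pi]$ inject into $H_1(N)$ with image $\widehat\pi/[\pi,\pi]$, which is precisely the paper's $T_2$). The difference is only in packaging: the paper proceeds entirely by elementary group theory, observing directly that $[\pi,\pi] \subseteq \widehat\pi$ because $\pi/\widehat\pi \simeq \ZZ_{17}$ is abelian, then defining $T_2 \coloneqq \widehat\pi/[\pi,\pi]$ and splitting the resulting extension by coprimality of $|T_2|$ and $17$. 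You obtain the same sequence via the LHS five-term sequence and split it via $\operatorname{Ext}^1(\ZZ_{17},T_2)=0$, which is the same coprimality argument in cohomological clothing. Your route has the minor bonus of identifying $T_2$ intrinsically as the $\ZZ_{17}$-coinvariants of $H_1(\widehat M)$; the paper's route has the virtue of needing no spectral-sequence input at all. Your alternative transfer argument is also fine, though phrased a bit loosely at $p=17$: the clean statement is that the kernel of $H_1(N)\twoheadrightarrow\ZZ_{17}$ equals the image of $\pi_*$, hence is a quotient of the $2$-group $H_1(\widehat M)$, so the $17$-primary part of $H_1(N)$ maps isomorphically onto $\ZZ_{17}$.
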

\begin{proof}
Let $\widehat \pi \coloneqq \pi_1(\widehat M_{i,I}^\pm), \pi \coloneqq \pi_1(N_{i,I}^\pm)$; then $\widehat \pi$ is a normal subgroup of $\pi$ of index $17$. Moreover, by the Hurewicz theorem, we have $H_1(\widehat M_{i,I}^\pm) \simeq \widehat \pi / [\widehat \pi, \widehat \pi]$ and $H_1(N_{i,I}^\pm) \simeq \pi / [\pi, \pi]$. 

Since the quotient $\pi / \widehat \pi\simeq \ZZ_{17}$ is abelian, the commutator subgroup $[\pi, \pi]$ is contained (and is normal) in $\widehat \pi$. If we define $T_2 \coloneqq \widehat \pi / [\pi, \pi]$, which is a quotient of $H_1(\widehat M_{i,I}^\pm)$ (and hence a $2$-group by Corollary~\ref{cor:hom-m-hat}), we have $T_2 \triangleleft \pi / [\pi, \pi] = H_1(N_{i,I}^\pm)$. Finally, since $T_2$ and $H_1(N_{i,I}^\pm) / T_2 \simeq  \ZZ_{17}$ have coprime orders, we have $H_1(N_{i,I}^\pm) \simeq T_2 \oplus \ZZ_{17}$.
\end{proof}

\begin{cor}
    The manifold $\widehat M_{i,I}^\pm$ is the unique regular $17$-fold covering of $N_{i,I}^\pm$.
\end{cor}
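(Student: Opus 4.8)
The plan is to reduce the statement to the computation of $H_1(N_{i,I}^\pm)$ carried out above. Write $\pi \coloneqq \pi_1(N_{i,I}^\pm)$. Connected regular $17$-fold coverings of $N_{i,I}^\pm$ correspond, up to isomorphism of coverings, to normal subgroups of index $17$ in $\pi$; since $17$ is prime, such a subgroup is exactly the kernel of a surjective homomorphism $\pi \twoheadrightarrow \ZZ_{17}$, and two such surjections have the same kernel precisely when they differ by an automorphism of $\ZZ_{17}$. So it suffices to show that $\pi$ has a unique normal subgroup of index $17$, and then invoke the fact, established earlier, that $\widehat M_{i,I}^\pm \to N_{i,I}^\pm$ is a regular $17$-fold covering (indeed $N_{i,I}^\pm = \widehat M_{i,I}^\pm / \langle \Psi \rangle$ with $\langle \Psi \rangle$ of order $17$ acting freely).

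Next I would compute $\Hom(\pi, \ZZ_{17})$. By the Hurewicz theorem this equals $\Hom(H_1(N_{i,I}^\pm), \ZZ_{17})$, and by the previous theorem $H_1(N_{i,I}^\pm) \simeq T_2 \oplus \ZZ_{17}$ with $T_2$ a finite abelian $2$-group. Since $\gcd(|T_2|, 17) = 1$ we have $\Hom(T_2, \ZZ_{17}) = 0$, hence $\Hom(\pi, \ZZ_{17}) \simeq \Hom(\ZZ_{17}, \ZZ_{17}) \simeq \ZZ_{17}$.

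It follows that there are exactly $16$ surjections $\pi \twoheadrightarrow \ZZ_{17}$, namely the nonzero homomorphisms, all of which are obtained from a fixed one by postcomposition with an element of $\mathrm{Aut}(\ZZ_{17})$, a group of order $16$. Hence they all share the same kernel, so $\pi$ has a unique normal subgroup of index $17$, and therefore $N_{i,I}^\pm$ has a unique connected regular $17$-fold cover; as $\widehat M_{i,I}^\pm$ is one such cover, it must be it. I do not expect a real obstacle here, since this is a short deduction; the only point requiring a little care is the bookkeeping in the first paragraph — that isomorphism classes of regular $17$-fold coverings are in bijection with index-$17$ normal subgroups rather than with surjections onto $\ZZ_{17}$, so that the $16$ surjections collapse to a single covering — after which everything is immediate from the structure of $H_1(N_{i,I}^\pm)$.
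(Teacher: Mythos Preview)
Your argument is correct and is precisely the intended deduction: the paper states the corollary without proof, as an immediate consequence of the theorem $H_1(N_{i,I}^\pm) \simeq T_2 \oplus \ZZ_{17}$, and your proof spells out exactly that standard reasoning (index-$17$ normal subgroups $\leftrightarrow$ surjections onto $\ZZ_{17}$ modulo $\mathrm{Aut}(\ZZ_{17})$, then compute $\Hom(H_1,\ZZ_{17})$).
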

\begin{remark}\label{rmk:retract-hom}
    More can be said about the structure of $H_1(\widehat M_{i,I}^\pm)$ and $T_2$. Since $\widehat M_{i,I}^\pm$ orbifold covers $Y_i$, which is injectively included in $\widehat M_{i,I}^\pm$, we have a retraction $\widehat M_{i,I}^\pm \twoheadrightarrow Y_i$. Composing with the natural retraction $Y_i \twoheadrightarrow X_i$ and taking homology, we get an injection from $H_1(X_i)$ (see Table~\ref{tab:Xi-ab}) to (the $2$-torsion of) $H_1(\widehat M_{i,I}^\pm)$. Similarly, by considering a $17$-fold quotiented $Y_i$ inside $N_{i,I}^\pm$, we get an injection $H_i(Z_i) \hookrightarrow T_2$.
\end{remark}
\begin{remark}\label{rmk:higher-betti}
    The computation of higher homology groups is much harder for several reasons, the first of which is the failure of  the collapsing trick~(\ref{eq:exact-collapse}), since $H_k(X_i; F)$ is not necessarily $0$. Of course, even overlooking this issue, we would have to consider more complicated real toric spaces, arising from higher-dimensional simplicial complexes; computing $b_1$ and higher for their subcomplexes would be more involved than a simple graph search, as in the case of $b_0$.
\end{remark}

\subsection{Parallelizability}
In this part, we discuss the problem of existence of a parallelizable hyperbolic $5$-manifold, and how it relates to the objects studied up to this point. To be more specific, we prove:
\begin{prop}\label{prop:virt-par-P}
    There exists a parallelizable closed hyperbolic $5$-manifold tessellated by copies of $P$.
\end{prop}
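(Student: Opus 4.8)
The plan is to deduce this from Theorem~\ref{thm:virt-par} together with the explicit family built in the previous sections. I would take any one of the manifolds $N \coloneqq N_{i,I}^\pm$: by construction it is a closed orientable hyperbolic $5$-manifold tessellated by copies of $P$. Since $5 \in \{1,3,5,7\}$, Theorem~\ref{thm:virt-par} applies and produces a finite-index subgroup $\Gamma' \le \pi_1(N)$ whose associated covering space $N'$ is parallelizable.

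The only remaining step is to check that $N'$ inherits the two structural features we want, and this is immediate. The hyperbolic metric lifts along any covering map, so $N'$ is again a closed hyperbolic $5$-manifold; and each copy of $P$ in the tessellation of $N$ lifts to $[\pi_1(N):\Gamma']$ disjoint isometric copies of $P$ in $N'$, and together these tile $N'$. Hence $N'$ is a parallelizable closed hyperbolic $5$-manifold tessellated by copies of $P$, which is exactly the claim.

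Accordingly, I do not expect any genuine obstacle here: all of the difficulty is already absorbed into the construction of the $N_{i,I}^\pm$ and into Theorem~\ref{thm:virt-par}, and the one thing to verify — that being tessellated by copies of $P$ is preserved under passing to finite covers — is routine. If one preferred an argument not citing the general theorem, one could instead exploit arithmeticity directly: the holonomy of $N$ is conjugate into $\mathrm{SO}(5,1)$ with entries in a number field, and from $T\HH^5 \oplus \underline{\RR} \cong \HH^5 \times \RR^{5,1}$ one sees that $TN \oplus \underline{\RR}$ is flat and hence has vanishing rational Pontryagin classes; a Deligne--Sullivan-type argument then yields a finite cover $N'$ with trivial stable tangent bundle, and since $N'$ is a closed orientable $5$-manifold (so $\chi(N') = 0$) a short obstruction-theoretic destabilization in the odd dimension $5$ upgrades stable parallelizability to parallelizability. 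This is the same route one would take to establish Theorem~\ref{thm:virt-par} itself in the case $n=5$.
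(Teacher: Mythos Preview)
Your main argument is correct and matches the paper exactly: apply Theorem~\ref{thm:virt-par} to one of the $N_{i,I}^\pm$ and observe that the tessellation by copies of $P$ lifts to any finite cover.

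One caution about your alternative sketch, though: for a stably parallelizable closed $5$-manifold the obstruction to actual parallelizability is the Kervaire semi-characteristic $\kappa$, not the Euler characteristic. The vanishing $\chi(N')=0$ is automatic in odd dimensions and does not by itself let you destabilize. The paper handles this by passing to a further cover of even degree (Proposition~\ref{prop:kervaire-even}), which forces $\kappa=0$ via the index formula; only then does \cite{thomas} give parallelizability.
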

This is a direct consequence of a more general result, which may be of independent interest:
\begin{thm}\label{thm:virt-par}
    Let $n \in \{1,3,5,7\} \cup \{4k+1 \mid k \ge 2\}$. Then every closed hyperbolic $n$-manifold $M$ is virtually parallelizable.
\end{thm}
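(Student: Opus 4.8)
The plan is: (i) use the hyperboloid model to see that $TM\oplus\varepsilon^1$ is a \emph{flat} bundle; (ii) invoke Mostow--Prasad rigidity together with the Deligne--Sullivan theorem to make that bundle \emph{trivial} on a finite cover, so that $M$ is virtually stably parallelizable; (iii) run obstruction theory, where the dimension hypothesis forces the only surviving obstruction to be either absent or a single $\ZZ_2$-class; and (iv), in the case $n\equiv1\pmod4$, kill that class by one more finite cover.

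For (i)--(ii): passing to the orientation cover we may assume $M=\HH^n/\Gamma$ is orientable. In the hyperboloid model $\HH^n\subset\RR^{n,1}$, the restriction to $\HH^n$ of the trivial $\RR^{n+1}$-bundle splits $\Gamma$-equivariantly as $T\HH^n\oplus\nu$ with $\nu$ spanned by the position vector; descending, $TM\oplus\varepsilon^1\cong E_\rho$, the flat bundle of the holonomy $\rho\colon\Gamma\to\mathrm{SO}^+(n,1)\subset\mathrm{GL}(n+1,\RR)$. For $n\ge3$, Mostow--Prasad rigidity makes $\rho$ an isolated point of the $\Gamma$-character variety, which is defined over $\QQ$; hence $\rho$ is conjugate to a representation valued in $\mathrm{GL}(n+1,\overline{\QQ})$, and the Deligne--Sullivan theorem supplies a finite cover $M'\to M$ on which $E_\rho$, and therefore $TM'\oplus\varepsilon^1$, is trivial. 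Thus $M'$ is stably parallelizable. ($n=1$ is trivial, and for $n=3$ this step is classical.)

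For (iii): a trivialization of the rank-$n$ bundle $TM'$ over the $n$-complex $M'$ has obstructions in $H^i(M';\pi_{i-1}\mathrm{SO}(n))$; for $i\le n-1$ these lie in the stable range, equal $H^i(M';\pi_{i-1}\mathrm{SO})$, and vanish because $TM'$ is stably trivial. Only the top obstruction $o_n\in H^n(M';\pi_{n-1}\mathrm{SO}(n))\cong\pi_{n-1}\mathrm{SO}(n)$ (as $M'$ is closed, connected and orientable) can survive; since its image in $\pi_{n-1}\mathrm{SO}$ is zero, $o_n$ lies in the subgroup generated by the tangential clutching class $[TS^n]$. Now $[TS^n]=0$ precisely when $S^n$ is parallelizable, i.e.\ for $n\in\{1,3,7\}$ (Hopf invariant one); for these $n$ we get $o_n=0$, so $M'$ is parallelizable. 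For every other odd $n$, $[TS^n]$ has order $2$, so $o_n$ is a $\ZZ_2$-invariant, and for $n\equiv1\pmod4$ it is classically identified with the Kervaire semicharacteristic $k(M')=\sum_{i=0}^{(n-1)/2}\dim_{\FF_2}H_i(M';\FF_2)\bmod2$. (This matches $k(S^n)=1$: $S^n$ is stably but not actually parallelizable, so hyperbolicity must re-enter.)

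Step (iv) is where I expect the main difficulty. For a connected double cover $\widetilde M\to M'$ classified by $w\in H^1(M';\FF_2)$, the transfer/Gysin sequence gives $k(\widetilde M)\equiv\operatorname{rank}_{\FF_2}B_w\pmod2$, where $B_w(\alpha,\beta)=\langle w\cupp\alpha\cupp\beta,[M']\rangle$ is a symmetric bilinear form on $H^{(n-1)/2}(M';\FF_2)$. So it suffices to produce, in some finite cover of $M$, a class $w\ne0$ with $B_w$ of even rank; one further double cover then makes the semicharacteristic vanish and yields a parallelizable finite cover. Securing this needs a sufficient supply of index-$2$ subgroups of $\Gamma$ and control of the parity of $B_w$, obtained from residual finiteness of $\Gamma$ together with the rigidity of the hyperbolic structure; this parity bookkeeping is the delicate heart of the argument. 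It is also worth recording why the remaining congruence class is left out: for $n\equiv3\pmod4$ with $n\ge11$ one still gets a single $\ZZ_2$-obstruction $o_n\in\langle[TS^n]\rangle$, but it is no longer a semicharacteristic (it already detects, via a bordism-type invariant, that $S^{11}$ is not parallelizable), and there is no evident way to annihilate it by passing to finite covers -- whereas in dimensions $3$ and $7$ it is simply zero.
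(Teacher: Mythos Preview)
Your steps (i)--(iii) are essentially what the paper does, only more explicitly: the paper simply cites Deligne--Sullivan for virtual stable parallelizability and Thomas for the reduction to the single top obstruction, which for $n\in\{1,3,7\}$ vanishes and for $n=4k+1$ is the Kervaire semi-characteristic $\kappa$.

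The substantive gap is your step (iv). You acknowledge that your Gysin/transfer approach leaves you needing to find a class $w$ with $\operatorname{rank}_{\FF_2}B_w$ even, and you call the resulting ``parity bookkeeping'' the delicate heart of the argument without actually carrying it out. The paper avoids this entirely by invoking the Atiyah index formula for $\kappa$: on a $(4k{+}1)$-manifold with $w_{4k}=0$ one can take a $2$-frame field with finitely many singularities, and $\kappa$ is the mod~$2$ sum of their local indices in $\pi_{4k}(V_{4k+1,2})\simeq\ZZ_2$. The key observation is then trivial: under \emph{any} cover of even degree, each singular point lifts to an even number of singular points with the same index, so $\kappa$ of the cover is automatically $0$. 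There is nothing to arrange --- every even-degree cover works. Existence of an even-index subgroup then follows from Lubotzky's result that finitely generated linear groups over $\RR$ have such subgroups. This replaces your entire step (iv) with two lines and removes the need for any control over cup-product pairings.
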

This allows further generalizations of Proposition~\ref{prop:virt-par-P}, for example involving manifolds tessellated by copies of $Y_i$.

The proof of Theorem~\ref{thm:virt-par} starts from a deep result of Deligne--Sullivan~\cite[553]{sullivan}, which states that every closed hyperbolic manifold is virtually stably parallelizable. Thus, let $M'$ be a stably parallelizable finite cover of $M$.
Assume first that $n \in \{1, 3, 7\}$. Then, by~\cite[652]{thomas}, $M'$ is also parallelizable and we are done.

In the remaining cases $n = 4k+1, k \ge 1$, again by~\cite[652]{thomas}, the only obstruction to parallelizability of $M'$ is the Kervaire semi-characteristic:
\begin{defin}
The \emph{Kervaire semi-characteristic} of a manifold $X$ is a $\ZZ_2$-valued invariant, defined by the formula
\begin{equation}
    \kappa(X) \coloneqq \sum_{i \ge 0} b_{2i}(X; \ZZ_2) \mod 2.
\end{equation}
\end{defin}

Suppose that $X$ is a $(4k+1)$-manifold with $w_{4k}(X) = 0$; then we can find two sections of $TM$ that are linearly independent at all but a finite number of points~\cite{thomas}, and there is an index formula for $\kappa$~\cite[Theorem~5.1]{atiyah-kervaire}. A small $4k$-sphere centered on a singular point naturally maps into the space of $2$-frames in $\RR^{4k+1}$, i.e.\ the Stiefel manifold $V_{4k+1,2}$. This defines an element in $\pi_{4k}(V_{4k+1,2}) \simeq \ZZ_2$, the \emph{index} of the singular point. Finally, the sum of all indices gives $\kappa(X)$.

A direct consequence of the index formula is:
\begin{prop}\label{prop:kervaire-even}
    If $X$ is a $(4k+1)$-manifold with $w_{4k}(X) = 0$, and $\overline X \twoheadrightarrow X$ is a cover of even degree $d$, then $\kappa(\overline X) = 0$.
\end{prop}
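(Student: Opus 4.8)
The plan is to run the index-theoretic description of $\kappa$ recalled just above simultaneously on $X$ and on $\overline X$, and to compare the two counts of singular points. Write $f\colon \overline X \to X$ for the covering map, which is a local diffeomorphism; in particular $T\overline X \cong f^{*}TX$, so $w_{4k}(\overline X) = f^{*}w_{4k}(X) = 0$, and the hypotheses needed for the index formula hold for $\overline X$ as well as for $X$.

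Next I would fix a pair of sections $s_{1}, s_{2}$ of $TX$ that are linearly independent outside a finite set $S \subset X$, as provided by the cited result of Thomas. The pulled-back sections $f^{*}s_{1}, f^{*}s_{2}$ of $T\overline X$ are then linearly independent exactly outside $f^{-1}(S)$, since $df_{\tilde x}$ is a linear isomorphism at every $\tilde x \in \overline X$. As $f$ has degree $d$, each $p \in S$ has exactly $d$ preimages, so $\overline X$ has $d\cdot|S|$ singular points.

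It then remains to check that the index is unchanged under lifting. For $\tilde p \in f^{-1}(p)$, a small $4k$-sphere about $\tilde p$ is carried diffeomorphically by $f$ onto a small $4k$-sphere about $p$, and under the identification $T\overline X \cong f^{*}TX$ the associated map to the Stiefel manifold $V_{4k+1,2}$ is precisely the one for $p$ precomposed with this sphere diffeomorphism. Precomposition with a diffeomorphism of $S^{4k}$ acts on $\pi_{4k}(V_{4k+1,2}) \simeq \ZZ_2$ by $\pm 1$, hence trivially, so $\operatorname{ind}(\tilde p) = \operatorname{ind}(p)$. Summing over all singular points of $\overline X$ yields $\kappa(\overline X) = \sum_{p \in S} d\cdot \operatorname{ind}(p) = d\,\kappa(X) \equiv 0 \pmod 2$, because $d$ is even.

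The only step that requires genuine care — and the one I would write out in detail — is this last invariance of the index under the local diffeomorphism $f$: one must verify that the choices of local trivializations of $TX$ and $T\overline X$, as well as possible orientation-reversal on some sheets of the cover, do not change the resulting element of $\pi_{4k}(V_{4k+1,2})$. This is what makes the argument clean here, since that homotopy group is $\ZZ_2$, so the only possible discrepancies (signs, degree-$(-1)$ reparametrizations of the test sphere) are automatically absorbed. Everything else is a bookkeeping of preimages, which is where the parity of $d$ enters.
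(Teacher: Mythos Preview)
Your argument is correct and follows exactly the paper's approach: lift a singular $2$-frame field through the cover, observe that each singular point acquires $d$ preimages with the same $\ZZ_2$-index, and conclude by parity of $d$. You have simply fleshed out the details the paper leaves implicit, in particular the verification that $w_{4k}(\overline X)=0$ and the reason why the lifted index is unchanged.
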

\begin{proof}
    Take a singular $2$-frame field on $X$ and lift it to $\overline X$. Every singular point of $X$ lifts to $d$ singular points with the same index, which cancel out. Hence, $\kappa(\overline X) = 0$.
\end{proof}

Since $M'$ is stably parallelizable, its Stiefel--Whitney classes vanish in each positive degree. Hence, by Proposition~\ref{prop:kervaire-even}, any even degree cover $M''$ of $M'$ is parallelizable, having $\kappa(M'') = 0$.
Finally, by~\cite[Theorem~A]{lubotzky}, every finitely generated linear group defined over $\RR$, such as $\pi_1(M')$, has a subgroup of even index, corresponding to a parallelizable cover $M'' \twoheadrightarrow M' \twoheadrightarrow M$.

This concludes the proof of Theorem~\ref{thm:virt-par}. Note that our proof is highly non-constructive, and hence we have no concrete representation of $M''$, nor a bound on its volume.

Finally, recall that even-dimensional closed hyperbolic manifolds cannot be parallelizable because of their Euler characteristic; thus it is natural to ask:
\begin{question}
    What can be said about virtual parallelizability of closed hyperbolic $(4k+3)$-manifolds, for $k\ge 2$?
\end{question}

\section{Another class of \texorpdfstring{$5$}{5}-manifolds}\label{sec:niceX}
The construction of small $5$-manifolds in Section~\ref{sec:coloring} involves a certain degree of asymmetry, due to the successive quotients and the choice of an independent set $I$; moreover, even the simplified definition of the graph $\mathcal G$ given in Remark~\ref{rmk:nicer-graph} falls short of providing a satisfactory description of the vertex set.

There is, however, a different right-angled $5$-manifold with a more elegant definition. The construction starts with a $4$-manifold tessellated by $650$ order-$3$ $120$-cells, obtained with Everitt and Maclachlan's method~\cite{everitt-maclachlan}. The general idea is to start with a linear representation of the group $G = \langle a,b,c,d,e \rangle$ over some integral domain, in our case $\ZZ[\varphi]$, and then pass to a finite field quotient; in most cases (including ours), the kernel of the quotient map gives a torsion-free finite-index subgroup and therefore a manifold.

\subsection{Linear representations}
Recall that every Coxeter group admits a canonical linear representation, which can be constructed from its Coxeter diagram.
First, we define a bilinear form $B_{ij}$ on $\RR^n$, where $n$ is the number of nodes: if $m_{ij}$ is the label of the edge between nodes $i$ and $j$, let
\begin{equation}
    B_{ij} \coloneqq -\cos(\pi / m_{ij}),
\end{equation}
with the conventions $m_{ii} = 1$ and $\pi/\infty = 0$. Then, the $i$-th generator $g_i$ is realized as the reflection in the $i$-th coordinate vector:
\begin{equation}
    g_i \mapsto [x \mapsto x - 2B(x,e_i)e_i].
\end{equation}
By~\cite[Chapter~5, §4.4, Corollary~2]{bcox}, this representation is faithful, so we will identify group elements and matrices. Note that in general this representation differs from that obtained by Vinberg's algorithm in the arithmetic case.

In the case of $G$, the images of the generators end up in $\mathrm{GL}(5,\ZZ[2\cos(\pi/5)]) = \mathrm{GL}(5,\ZZ[\varphi])$.
By~\cite[Corollary~1]{everitt-maclachlan}, the natural quotient map $q\colon \mathrm{GL}(5,\ZZ[\varphi]) \twoheadrightarrow \mathrm{GL}(5,\mathbb F_5)$, induced by the ideal $(\varphi - 3)$, has torsion-free kernel. Let $\alpha,\beta,\gamma,\delta,\varepsilon, K$ be the images of $a,b,c,d,e,B$ under $q$. After a change of basis that diagonalizes $K$, computed with SageMath, we have:
\begin{align}
    \alpha &=
    \begin{bmatrix}
        4 & \zg0 & \zg0 & \zg0 & \zg0 \\
        \zg0 & 1 & \zg0 & \zg0 & \zg0 \\
        \zg0 & \zg0 & 1 & \zg0 & \zg0 \\
        \zg0 & \zg0 & \zg0 & 1 & \zg0 \\
        \zg0 & \zg0 & \zg0 & \zg0 & 1
    \end{bmatrix}\!,\hspace{-2.5em}&
    \beta &=
    \begin{bmatrix}
        4 & 3 & 1 & \zg0 & \zg0 \\
        3 & 4 & 1 & \zg0 & \zg0 \\
        1 & 1 & 3 & \zg0 & \zg0 \\
        \zg0 & \zg0 & \zg0 & 1 & \zg0 \\
        \zg0 & \zg0 & \zg0 & \zg0 & 1
    \end{bmatrix}\!,\hspace{-2.5em}&
    \gamma &=
    \begin{bmatrix}
        1 & \zg0 & \zg0 & \zg0 & \zg0 \\
        \zg0 & 1 & \zg0 & \zg0 & \zg0 \\
        \zg0 & \zg0 & 4 & \zg0 & \zg0 \\
        \zg0 & \zg0 & \zg0 & 1 & \zg0 \\
        \zg0 & \zg0 & \zg0 & \zg0 & 1
    \end{bmatrix}\!,\\
    \delta &=
    \begin{bmatrix}
        1 & \zg0 & \zg0 & \zg0 & \zg0 \\
        \zg0 & 4 & 1 & 3 & \zg0 \\
        \zg0 & 1 & 3 & 1 & \zg0 \\
        \zg0 & 3 & 1 & 4 & \zg0 \\
        \zg0 & \zg0 & \zg0 & \zg0 & 1 \\
    \end{bmatrix}\!,\hspace{-2.5em}&
    \varepsilon &=
    \begin{bmatrix}
        1 & \zg0 & \zg0 & \zg0 & \zg0 \\
        \zg0 & 1 & \zg0 & \zg0 & \zg0 \\
        \zg0 & \zg0 & 1 & \zg0 & \zg0 \\
        \zg0 & \zg0 & \zg0 & 3 & 1 \\
        \zg0 & \zg0 & \zg0 & 2 & 2
    \end{bmatrix}\!,\hspace{-2.5em}&
    K &=
    \begin{bmatrix}
        1 & \zg0 & \zg0 & \zg0 & \zg0 \\
        \zg0 & 1 & \zg0 & \zg0 & \zg0 \\
        \zg0 & \zg0 & 1 & \zg0 & \zg0 \\
        \zg0 & \zg0 & \zg0 & 1 & \zg0 \\
        \zg0 & \zg0 & \zg0 & \zg0 & 3
    \end{bmatrix}\!.
\end{align}
The group $Q \coloneqq \image q = \langle \alpha, \beta, \gamma, \delta, \varepsilon \rangle$ has cardinality \num{9360000}, so it has index $2$ in the whole orthogonal group $\mathrm{O}(5,5)$ (which does not depend on the choice of a bilinear form). It can be characterized as the kernel of the \emph{spinor norm} $\mathrm{O}(5,5)\to \mathbb F_5^\times / (\mathbb F_5^\times)^2$ (as defined in~\cite[178]{cassels}); its group structure may also be described as the unique nontrivial semidirect product of the simple group $\mathrm{S}_4(5)$ with $\ZZ_2$, or equivalently as $\operatorname{Aut}(\mathrm{S}_4(5))$.

\subsection{The \texorpdfstring{$4$}{4}-manifold}
Now we are ready to describe the manifold $X$ associated to $\ker q$. The $120$-cell stabilizer $\Sigma = \langle a,b,c,d \rangle$ injects into $Q$, and the largest subspace of $\mathbb{F}_5^5$ that it preserves is the span of $v_0\coloneqq (0,0,0,0,1)$. Since the stabilizer of $v_0$ is exactly $q(\Sigma)$, there is a bijection between the orbit $Qv$ and the set of $120$-cells that comprise $X$. The adjacency structure between the $120$-cells can be obtained just like in Section~\ref{sec:adj-graph}, by taking the orbit of the pair of adjacent cells $(v_0, \varepsilon v_0)$. The resulting graph $\mathcal G'$ has $650$ vertices, each of degree $120$, and \num{78000} edges, and it admits a beautiful description:
\begin{align}
    V(\mathcal G') &= \{v \in \mathbb F_5^5 \mid K(v,v) = 3\},
\\  E(\mathcal G') &= \{(v,w) \in \mathbb F_5^5 \times \mathbb F_5^5 \mid K(v,w) = 1\}.
\end{align}
\begin{remark}
    Interestingly, this alternate definition is reminiscent of hyperbolic $4$-space, but as a smooth variety over a finite field: indeed, $K$ is a bilinear form with a single non-square diagonal entry, like the Lorentzian form on $\RR^{4,1}$, and by interpreting it as a distance, the edges of $\mathcal G'$ appear to describe points that are close together, in some sense.
\end{remark}
We could ask if $\mathcal G'$ contains enough information to recover the topology of $X$, since it is naturally embedded in the $1$-skeleton of the dual tessellation. However, the latter has \num{78000} $4$-simplices, while the induced flag simplicial complex of $\mathcal G'$ has exactly twice as many $5$-cliques, which we call \emph{real} if they correspond to a $4$-simplex and \emph{virtual} otherwise.

We can check that $\operatorname{Aut}(\mathcal G')$ acts transitively on the set of $5$-cliques, while $Q$ partitions it into two orbits of size \num{78000}. It is not hard to see that the action of $Q$ must send real cliques to real cliques, so we can arbitrarily declare one $Q$-orbit, such as that of $\{v_0,
\varepsilon v_0,
\delta \varepsilon v_0,
\gamma \delta \varepsilon v_0,
\beta \gamma \delta \varepsilon v_0\}$, as the set of real cliques, and construct $X$ as a simplicial complex.

Now we can use the GAP package HAP~\cite{HAP} to compute the integral homology of $X$:
\begin{equation}
    H_0(X) = H_4(X) = \ZZ,\enspace
    H_1(X) = H_3(X) = \ZZ^{144},\enspace
    H_2(X) = \ZZ^{936}.
\end{equation}
The Euler characteristic is $650$: this is to be expected, since each $120$-cell has $\chi = 1$.

As before, in order to construct a $5$-manifold, we start with a manifold with corners $Y$, obtained by arranging $650$ copies of the usual $120$-cell prism onto one side of $X$. Indeed, $X$ being orientable, this produces an orientable $Y$ with $X$ as a totally geodesic boundary component. In the coloring method, this facet can be arbitrarily colored, as it does not meet any other facet. The other $650$ facets are isometric to the right-angled $120$-cell, and we will call them \emph{small}. Their adjacency graph is isomorphic to $\mathcal G'$.

\begin{remark}
    If $F$ is a small facet of $Y$ corresponding to $v \in V(\mathcal G')$, then there is a natural map $\operatorname{Stab}_Q(v) \to \operatorname{Isom}(F)$, which is injective by rigidity of isometries. It can be checked that both domain and codomain have cardinality ${14400}$ (in particular, $\operatorname{Isom}(F)$ is the Coxeter group $[5,3,3]$). Hence, the map is also surjective: every isometry of a small facet extends to an isometry of $Y$.
\end{remark}

\subsection{Colorings}
One could expect the number $13$ to play a crucial role in coloring $\mathcal G'$, if only because of the prime factorization $650 = 2 \cdot 5^2 \cdot 13$. In fact, if $\lambda_1, \lambda_n$ are the largest and smallest eigenvalues of $\mathcal G'$, at least $1-\lambda_1/\lambda_n = 1 - 120/(-10) = 13$ colors are needed by the \emph{Hoffman bound}~\cite{hoffman}. As we will see in a moment, this bound is realized.

Unlike the case of the manifolds $N_{i,I}^\pm$, there is no ``good'' independent set with one element in each orbit of a $13$-Sylow subgroup. However, there is a different, reasonably symmetrical construction. Up to conjugacy, there is exactly one subgroup $L < Q$ whose action on $\mathcal G'$ has $26$ orbits of size $25$, which are all independent sets. It has order $125$, and it is generated by the two matrices
\begin{equation}
    \begin{bmatrix}
           0 &    3 &    2 &    0 &    2 \\
           4 &    0 &    1 &    4 &    2 \\
           1 &    1 &    0 &    1 &    3 \\
           4 &    2 &    3 &    2 &    2 \\
           4 &    3 &    2 &    0 &    3 \\
    \end{bmatrix}\!,\enspace
    \begin{bmatrix}
           1 &    2 &    0 &    2 &    1 \\
           3 &    2 &    2 &    1 &    3 \\
           0 &    3 &    1 &    3 &    4 \\
           3 &    1 &    2 &    2 &    3 \\
           3 &    1 &    2 &    1 &    4 \\
    \end{bmatrix}\!.
\end{equation}
If a smaller coloring is desired, these orbits can be paired together into $13$ independent sets of size $50$, in $64$ different ways. At the cost of introducing this choice, if we color each independent set with a basis vector of $\ZZ_2^{13}$, we obtain manifolds tessellated into $2^{13}$ copies of $Y$.

\begin{remark}\label{rmk:small-coloring-650}
    An even more efficient yet less symmetrical coloring, using binary vectors, arises from the dual of a linear binary code of length $13$, dimension $4$ and minimum distance $6$~\cite{codes}. Its columns are elements of $\ZZ_2^9$, corresponding to $512$ copies of $Y$ and $650 \cdot \num{14400} \cdot 512 = \num{4792320000}$ copies of $P$, for a volume of $\num{9511300.396911}\dots$.
\end{remark}

\setlength\bibitemsep{0.7ex}
\printbibliography[heading=bibintoc, title={References}]

@manual{sagemath,
  Key          = {SageMath},
  Author       = {{The Sage Developers}},
  Title        = {{S}ageMath, the {S}age {M}athematics {S}oftware {S}ystem ({V}ersion 10.5)},
  url          = {https://www.sagemath.org},
  Year         = {2024},
}

@manual{GAP4,
    key          = "GAP",
    organization = "The GAP~Group",
    title        = "{GAP -- Groups, Algorithms, and Programming,
                    Version 4.14.0}",
    year         = 2024,
    url          = "https://www.gap-system.org",
}

@manual{HAP,
    key          = "HAP",
    organization = "G. Ellis",
    title        = "{HAP -- Homological Algebra Programming,
                    Version 1.66}",
    year         = 2024,
    url          = "http://www.gap-system.org/Packages/hap.html",
}

@article{conder-maclachlan,
 ISSN = {00029939, 10886826},
 author = {M. Conder and C. Maclachlan},
 journal = {Proceedings of the American Mathematical Society},
 number = {8},
 pages = {2469--2476},
 publisher = {American Mathematical Society},
 title = {Compact Hyperbolic 4-Manifolds of Small Volume},
% urldate = {2024-01-14},
 volume = {133},
 year = {2005}
}

@article{long,
author = {Long, C.},
title = {Small volume closed hyperbolic 4-manifolds},
journal = {Bulletin of the London Mathematical Society},
volume = {40},
number = {5},
pages = {913-916},
doi = {https://doi.org/10.1112/blms/bdn077},
year = {2008}
}

@article{coloring-orient,
 ISSN = {00029939, 10886826},
 author = {A. Kolpakov and B. Martelli and S. Tschantz},
 journal = {Proc. Amer. Math. Soc.},
 number = {9},
 pages = {4103--4111},
 publisher = {American Mathematical Society},
 title = {Some hyperbolic three-manifolds that bound geometrically},
 volume = {143},
 year = {2015}
}

@article{prasad,
author = {Prasad, G.},
journal = {Publications Mathématiques de l'IHÉS},
keywords = {global field; semi-simple algebraic group; volume; S-arithmetic subgroup; reductive groups; bound for class numbers},
pages = {91-114},
publisher = {Institut des Hautes Études Scientifiques},
title = {Volumes of S-arithmetic quotients of semi-simple groups},
volume = {69},
year = {1989},
%language = {eng},
}

@misc{codes,

    title        = {Bounds on the minimum distance of linear codes},
    author       = {M. Grassl},
    year         = 2009,
    note         = {\url{
http://codetables.de/BKLC/} (Visited 2024-02-14)}
}

@inbook{everitt-maclachlan, 
    place={Cambridge},
    series={London Mathematical Society Lecture Note Series}, 
    title={Constructing hyperbolic manifolds}, 
    booktitle={Computational and Geometric Aspects of Modern Algebra}, 
    publisher={Cambridge University Press}, 
    author={Everitt, B. and Maclachlan, C.}, 
    year={2000}, 
    pages={78–86}, 
    collection={London Mathematical Society Lecture Note Series}
}

@book{bcox,
  title={Lie Groups and Lie Algebras: Chapters 4--6},
  note={{Translated} from {French} by {Andrew} {Pressley}},
  author={Bourbaki, N.},
  year={2008},
  publisher={Springer Berlin Heidelberg}
}

@incollection{cassels,
title = {Rational Quadratic Forms},
booktitle = {North-Holland Mathematics Studies},
publisher = {North-Holland},
volume = {74},
pages = {9-26},
year = {1982},
doi = {https://doi.org/10.1016/S0304-0208(08)70410-9},
author = {J. W. S. Cassels},
}

@inbook{hoffman,
author = {A. J. Hoffman},
title = {On Eigenvalues and Colorings of Graphs},
booktitle = {Selected Papers of Alan J. Hoffman},
chapter = {},
pages = {407-419},
doi = {10.1142/9789812796936_0041},
}

@book{qr-codes,
title = {The Theory of Error-Correcting Codes},
author = {F. J. MacWilliams and N. J. A. Sloane},
series = {North-Holland Mathematical Library},
publisher = {Elsevier},
year = {1977},
issn = {0924-6509},
doi = {https://doi.org/10.1016/S0924-6509(08)70541-5},
}

@article{choi-park,
author = {Choi, S. and Park, H.},
year = {2019},
month = {11},
pages = {},
title = {Multiplication structure of the cohomology ring of real toric spaces},
volume = {22},
journal = {Homology, Homotopy and Applications},
doi = {10.4310/HHA.2020.v22.n1.a7}
}

@book{atiyah-kervaire,
author="Atiyah, M. F.",
title="Vector Fields on Manifolds",
year="1970",
publisher="VS Verlag f{\"u}r Sozialwissenschaften",
isbn="978-3-322-98503-3",
doi="10.1007/978-3-322-98503-3_1",
}

@article{thomas,
  title={Vector fields on manifolds},
  author={E. Thomas},
  journal={Bulletin of the American Mathematical Society},
  year={1969},
  volume={75},
  pages={643-683},
}

@incollection{sullivan,
title = {Hyperbolic geometry and homeomorphisms},
booktitle = {Geometric Topology},
publisher = {Academic Press},
pages = {543-555},
year = {1979},
isbn = {978-0-12-158860-1},
doi = {https://doi.org/10.1016/B978-0-12-158860-1.50034-4},
author = {D. Sullivan}
}

@article{lubotzky,
    author = {Lubotzky, A.},
    title = "{On Finite Index Subgroups of Linear Groups}",
    journal = {Bulletin of the London Mathematical Society},
    volume = {19},
    number = {4},
    pages = {325-328},
    year = {1987},
    month = {07},
    issn = {0024-6093},
    doi = {10.1112/blms/19.4.325},
}

@article{vol-p,
author = {V. Emery and R. Kellerhals},
title = {{The three smallest compact arithmetic hyperbolic $5$–orbifolds}},
volume = {13},
journal = {Algebraic \& Geometric Topology},
number = {2},
publisher = {MSP},
pages = {817 -- 829},
year = {2013},
doi = {10.2140/agt.2013.13.817},
}

@manual{PARI2,
  organization = "{The PARI~Group}",
  title        = "{PARI/GP version \texttt{2.17.0}}",
  year         = 2024,
  address      = "Univ. Bordeaux",
  note         = "available from \url{http://pari.math.u-bordeaux.fr/}"
}

@article{conjugacy-cox,
    author = {D. Krammer},
    title = {The conjugacy problem for Coxeter groups},
    volume = {1},
    journal = {Groups Geom. Dyn.},
    number = {3},
    year = {2009},
    pages = {71-–171}
}

@article{bugaenko,
    title = {Groups of automorphisms of unimodular hyperbolic quadratic forms over the ring $\mathbf{Z}\bigl[\frac{\sqrt5+1}2\bigr]$},
    author = {V. O. Bugaenko},
    year = {1984},
    issue = {5},
    pages = {6--12},
    journal = {Vestnik Moskov. Univ. Ser.~1. Mat. Mekh.},
    language = {(In Russian)},
}

@article{garrison-scott,
    author = {A. Garrison and R. Scott},
    year = {2003},
    pages = {963--971},
    title = {Small covers of the dodecahedron and the 120-cell},
    volume = {131},
    journal = {Proc. Amer. Math. Soc.},
    doi = {10.1090/S0002-9939-02-06577-2}
}

@article{PV,
    author = {L. Potyagailo and E. Vinberg},
    title = {On right-angled reflection groups in hyperbolic spaces},
    journal = {Comment. Math. Helv.},
    volume = {80},
    date = {2005},
    number = {1},
    pages = {63--73}
}

@article{davis-januszkiewicz,
  author    = {M. Davis and T. Januszkiewicz},
  title     = {Convex polytopes, {C}oxeter orbifolds and torus actions},
  journal   = {Duke Mathematical Journal},
  volume    = {62},
  pages     = {417--451},
  year      = {1991},
}

@article{vesnin,
  author    = {A. Yu. Vesnin},
  title     = {Three-dimensional hyperbolic manifolds of {L\"obell} type},
  journal   = {Siberian Mathematical Journal},
  volume    = {28},
  pages     = {731--734},
  year      = {1987},
}

@misc{git,
    author = {{G}itHub repository},
    sortname = {GitHub},
    date = {2025},
    url = {https://github.com/floatingpoint-754/closed-hyp-5-manifolds}
}
\end{document}